\begin{document}

\setcounter{page}{233}
\publyear{2021}
\papernumber{2098}
\volume{184}
\issue{3}

     \finalVersionForARXIV


\title{Theory of  Constructive Semigroups with Apartness -- \\ Foundations, Development and Practice}

\author{Melanija Mitrovi\'c\thanks{Address for correspondence: Faculty of Mechanical Engineering,
                    A. Medvedeva 14, 18000 Ni\v s, Serbia. \newline \newline
          \vspace*{-6mm}{\scriptsize{Received March 2021; \ accepted January 2022.}}}
 \\
        Faculty of Mechanical Engineering\\
           University of Ni\v s \\
        A. Medvedeva 14, 18000 Ni\v s, Serbia \\
        melanija.mitrovic@masfak.ni.ac.rs
\and    Mahouton Norbert Hounkonnou\thanksas{1}\\
        Int. Chair in Mathematical Physics and Applications\\
        (ICMPA-UNESCO Chair) \\
        University of Abomey-Calavi \\
        072 B.P. 50 Cotonou, Republic of Benin, Africa \\
        norbert.hounkonnou@cipma.uac.bj \\
        (with copy to hounkonnou@yahoo.fr)
\and    Marian Alexandru Baroni\thanksas{1}\\
        “Dunarea de Jos” University of Galati,
         Romania \\
         marian.baroni@ugal.ro
  }

\maketitle

\runninghead{M. Mitrovi\'c et al. }{Theory of Constructive Semigroups with Apartness  ...}

{\vspace*{3mm}\leftskip 1.5cm \scriptsize  ``Semigroups aren't a barren, sterile
flower on the tree of algebra, they are a natural algebraic approach
to some of the most fundamental concepts of algebra (and mathematics
in general), this is why they have been in existence for more then
half a century, and this is why they are here to stay.'' }
\\
\hbox{}\hfill \scriptsize {Boris M. Schein, \cite{bms}\vspace*{3mm}}

\begin{abstract}
 This paper has several purposes. We present through a critical
review the results from already published papers on the constructive
semigroup theory, and contribute to its further development by
giving solutions to open problems. We also draw attention to its
possible applications in other (constructive) mathematics
disciplines, in computer science, social sciences, economics, etc.
Another important goal of this paper is to provide  a clear,
understandable picture of constructive semigroups with apartness in
Bishop's style both to (classical) algebraists and the  ones who
apply algebraic knowledge.
\end{abstract}

\begin{keywords}
Semigroup with apartness, set with apartness, co-quasiorder,
    co-equivalence, co-congruence.
\end{keywords}

\normalsize
\section{Introduction}

A general answer to the question what constructive mathematics is
could be formulated as follows: it is mathematics which can be
implemented on a computer. There are two main ways of developing
mathematics constructively. The first one uses classical traditional
logic  within  a strict algorithmic framework. The second way is to
replace classical logic with \emph{intuitionistic logic}.

Throughout this paper \emph{constructive mathematics} is understood
as mathematics performed in the context of intuitionistic logic,
that is, without the law of excluded middle (\textbf{LEM}). There
are two main characteristics for a constructivist trend. The notion
of \emph{truth} is not taken as primitive, and \emph{existence}
means constructibility. From the classical mathematics
(\textbf{CLASS}) point of view, mathematics consists of a
preexisting mathematical truth.  From a constructive viewpoint,  the
judgement $\varphi$ \emph{is true} means that \emph{there is a proof
of} $\varphi$.
 ``What constitutes  a proof is a social construct, an agreement
among people as to what is a valid argument. The rules of logic
codify a set of principles of reasoning that may be used in a valid
proof. Constructive (intuitionistic) logic codifies the principles
of mathematical reasoning as it is actually practiced,'' \cite{hr}.
In constructive mathematics, the \emph{status of existence
statement} is much stronger than in \textbf{CLASS}. The classical
interpretation is that an object exists if its non-existence is
contradictory. In constructive mathematics when the existence of an
object is proved, the proof also demonstrates how to find it. Thus,
following further \cite{hr}, the \emph{constructive logic} can be
described as logic of people matter, as distinct from the classical
logic, which may be described as the logic of the mind of God. One
of the main features of constructive mathematics is that the
concepts that are equivalent in the presence of \textbf{LEM}, need
not be equivalent any more. For example, we distinguish nonempty and
inhabited sets, several types of inequalities, two complements of a
given set, etc.

There is no  doubt about deep connections between constructive
mathematics and computer science. Moreover, ``if programming is
understood  not as the writing of instructions for this or that
computing machine but as the design of methods of computation that
is the computer's duty to execute, then it no longer seems possible
to distinguish the discipline of programming from constructive
mathematics'',  \cite{ml}.

Constructive mathematics is not a unique notion. Various forms of
constructivism have been developed over time. The principal trends
include the following varieties: \textbf{INT} - Brouwer's
intuitionistic mathematics, \textbf{RUSS} - the constructive
recursive mathematics of the Russian school of Markov, \textbf{BISH}
- Bishop's constructive mathematics. Every form  has intuitionistic
logic at its core. Different schools have different additional
principles or axioms given by the particular approach to
constructivism. For example, the notion of  an \emph{algorithm} or a
\emph{finite routine} is taken as primitive in \textbf{INT} and
\textbf{BISH}, while \textbf{RUSS} operates with a fixed programming
language and an algorithm is a sequence of symbols in that language.
We have to emphasize that  Errett Bishop - style constructive
mathematics, \textbf{BISH},  forms the framework for our work.
\textbf{BISH}  enables one to interpret the results both in
classical mathematics and in other varieties of constructivism.
\textbf{BISH} originated in 1967 with the publication of the book
\emph{Foundations of Constructive Mathematics},  \cite{eb}, and with
its second, much revised edition in 1985, \cite{bdb}. There has been
a steady stream of publications contributing to Bishop's programme
since 1967.  A ten-year long systematic research of computable
topology, using apartness as the fundamental notion, resulted in the
first book, \cite{dbv2}, on topology within \textbf{BISH} framework.
\texttt{Modern algebra}, as is noticed
\eject
\noindent  in \cite{dbre}, ``contrary to
Bishop's expectations, also proved amenable to natural,
thoroughgoing, constructive treatment''.

 Working within the classical theory of semigroups
 several years ago we, \cite{mm}, decided to change the  classical
background with the intuitionistic one. This meant, among other
things, that the perfect safety of the  classical theory with
developed notions, notations and methodologies was left behind.
Instead, we embarked on an adventure into exploring an algebraically
new area (even without clearly stated notions and notations) of
\emph{constructive semigroups with apartness}. What we had ``in
hand'' at that moment was the experience and knowledge coming from
the classical semigroup theory, other constructive mathematics
disciplines  such as, for example, constructive analysis, and,
especially, from constructive topology, as well as constructive
theories of groups and rings with tight apartness and computer
science. For classical algebraists who, like us, wonder ``on the odd
day''  what constructive algebra is all about, and  who want to find
out what it  feels like doing it, they will understand soon that
\emph{constructive algebra} is more complicated than classical
algebra in various ways: algebraic structures as a rule do not carry
a decidable equality relation (this difficulty is partly met by the
introduction of a strong inequality relation, the so-called
apartness relation); there is (sometime) the awkward abundance of
all kinds of substructures, and hence of quotient structures,
\cite{t1}.

As highlighted by Romano DA \emph{et al},
\cite{mmscr},
 \emph{the theory of semigroup with apartness} is a new
approach to semigroup theory and not a new class of semigroups. It
presents a semigroup facet of some relatively well established
direction of constructive mathematics which, to the best of our
knowledge, has not yet been considered within the semigroup
community. This paper has several purposes: to present through a
critical review results from our already published papers,
\cite{cmr1}, \cite{cmr2}, \cite{mmscr}, on the constructive point of
view on semigroup theory, to contribute to its further development
giving solutions to the problems posted (in the open, or, somehow
hidden way) within the scope of those papers, and to lay the
foundation for further works.

The order theory provides one of the most basic tools of semigroup
theory within \textbf{CLASS}. In particular, the structure of
semigroups is usually most clearly revealed through the analysis of
the behaviour of  their appropriate orders.  The most basic concept
leads to the \emph{quasiorders}, reflexive and transitive relations
with the fundamental concepts being introduced whenever possible in
their natural properties. Going through \cite{cmr1}, \cite{cmr2},
\cite{mmscr}, we can conclude that one of the main objectives of
those papers is to develop an appropriate constructive order theory
for semigroups with apartness. We outline some of the basic concepts
of semigroups with apartness, as special subsets on the one hand,
and orders on the other. The strongly irreflexive and
 co-transitive relations are building blocks of the constructive order
 theory we develop. With a primitive notion of 'set with apartness'
 our main intention was to connect all relations defined on such a
 set.  This is done by requiring them to be a part (subset) of an apartness.
 Such a relation is clearly strongly irreflexive. If, in addition,
 it is  co-transitive, then it is called \emph{co-quasiorder}.

In  algebra within \textbf{CLASS},  the formulation of homomorphic
images (together with substructures and direct products) is one of
the principal tools used to manipulate algebraic structures. In the
study of homomorphic images of an algebraic structure, a lot of help
comes from the notion of a\textbf{\ }quotient structure, which
captures all homomorphic images, at least up to isomorphism. On the
other hand, the homomorphism is the concept which goes hand in hand
with congruences. The relationship between quotients, homomorphisms
and congruences is described by the celebrated \emph{isomorphism
theorems}, which are a general and important foundational part of
abstract and universal algebras. The quotient structures are not
part of \textbf{BISH}. The quotient structure does not, in general,
have a natural apartness relation. So, \emph{the Quotient Structure
Problem} (\textbf{QSP}) is one of the very first problem which has
to be considered for any structure with apartness. The solutions of
\textbf{QSP} problem for sets and semigroups with apartness were
given in \cite{cmr1}. Some examples of special cases
can be found in \cite{dar-05} and \cite{dar-07}.
\emph{Co-equivalences}, symmetric co-quasiorders, and equivalences
which can be associated to them play the main roles. As an example
that a single concept of classical mathematics may split into two or
more distinct concepts when working constructively we have logical
$\neg Y$ and apartness $\sim Y$ complement of a given subset $Y$ of
a set or semigroup with apartness. The key for the solution of the
\textbf{QSP} for a set and semigroup with apartness is given by the
next theorem (Theorem 2.3, \cite{cmr1}).

\begin{teo}\label{cmr1th1}
If $\kappa$ is a co-equivalence on $S$, then the relation
${\sim}\kappa$ is an equivalence on $S$, and $\kappa$ defines
apartness on  $S/\,\sim\kappa$.
\end{teo}

Theorem~\ref{cmr1th1} is the key ingredient to formulate and prove
the apartness isomorphism theorem for a set with apartness (see
\cite{cmr1},  Theorem 2.5). Based on these results, the apartness
isomorphism theorem for a semigroup with apartness (\cite{cmr1},
Theorem 3.4) is formulated and proved as well. The just mentioned
results are significantly improved in
 \cite{mmscr}, where, among other things, it is proved that the two
 complements, logical and apartness, coincide for a  co-quasiorder $\tau$
 defined on a set or semigroup with apartness, i.e. we have $\sim\tau = \neg\tau$
 (see Proposition 2.3, Theorem 2.4 from \cite{mmscr}).

 \begin{rem}
 In \cite{mmss2}, an overview to the development of isomorphism
 theorems in certain algebraic structures - from classical to
 constructive - is given.
 \end{rem}

It is well known that within \textbf{CLASS } a number of subsets of
a semigroup enjoy special properties relative to  multiplication,
for example, completely isolated subset, convex subset,
subsemigroup, ideal. On the other hand, relations defined on a
semigroup can be distinguished one from another according to the
behaviour of their related elements  to multiplication. From that
point of view,  \emph{positive quasiorders} are  of special
interest. Going through literature with the theory of semigroups as
the main topic, one can see that there is almost no method of
studying semigroups without a certain type of positive quasiorders
involved. It is often the case that results on connections between
positive quasiorders and subsets defined above showed them fruitful
as well. Partly inspired by classical results, in \cite{cmr2} we
consider \emph{complement positive co-quasiorders}, i.e.
constructive counterparts of positive quasiorders, and their
connections with special subsets of semigroups with apartness.
Inspired by the existing notion from constructive analysis and
topology, we use the complements (both of them) for the
classification of subsets of a given set with apartness.
\emph{Strongly detachable subsets}, i.e. those subsets for which we
can decide whether an element $x$ from that set belongs to the
subset in question or to its apartness complement, play a
significant role within the scope of \cite{cmr2}. In Lemma 3.2, we
prove that for any co-quasiorder $\tau$, defined on a set with
apartness, its left and right $\tau$-classes of any element are
strongly detachable subsets. The main result of this paper, Theorem
4.1, gives the description of a complement positive co-quasiorder,
defined on a semigroup with apartness, via the behaviour of its left
and right classes and their connections with special subsets.

Apart from the above issues, an addition problem is the so-called
\emph{constant domain axiom}: the folklore
 type of axiom in \textbf{CLASS} algebra
$$\varphi\,\vee\, \forall  x \, \psi (x) \ \leftrightarrow \ \forall
x\, (\varphi\,\vee\,  \psi (x)),$$ its constructive version
$$\varphi\,\vee\, \forall _x \, \psi (x) \ \rightarrow \ \forall
_x\, (\varphi\,\vee\,  \psi (x))$$ can be a source of problems when
doing algebra constructively. We choose intuitionistic logic of
constant domains \textbf{CD }to be the background of \cite{cmr2}.
Recall that  intermediate logic, such as, for example \textbf{CD},
the logic that is stronger than  intuitionistic logic but weaker
than  classical one, can be constructed by adding one or more axioms
to intuitionistic logic. There is a continuum of such logics.
For more details see \cite{fa}, \cite{ag}.

\medskip
The presence of apartness implies the appearance of different types
of substructures connected to it. We deal with strongly detachable
subsets  in \cite{cmr2}. In \cite{mmscr} we mentioned two more:
detachable and quasi-detachable subsets. In Proposition 2.1 we show
that a strongly detachable subset is detachable and
quasi-detachable. Even more, from \cite{mmscr} the apartness and
logical complements coincide for strongly detachable and
quasi-detachable subsets.

\medskip
Going through \cite{cmr2} and \cite{mmscr} it can be noticed that we
can face  several problems arising from their scope. For example,
\begin{itemize}
\item The relations between detachable, strongly detachable and quasi
detachable subsets are only partially described in \cite{mmscr},
Proposition 2.1. A complete description of their relationship
remains an open problem.

\item Are  the results of \cite{cmr2} valid in  intuitionistic logic if
we work with quasi-detachable subsets instead of strongly detachable
ones? Which of the presented results or their form(s) are valid for
the intuitionistic background, if any?
\end{itemize}

To conclude, the theory of semigroup with apartness,  its background
and motivations, further development and its  possible applications
as well as the critical answers to a number of questions including
those mentioned above  will be the main topics  throughout this
paper.

The paper is organized in the following way.  In our work on
constructive semigroups with apartness, as it is pointed out above,
we have faced an algebraically completely new area. The background
and motivation coming from the classical semigroup theory, other
constructive mathematics disciplines and computer science are the
content of   \textbf{Section 2}. Some results on classical
semigroups which can partly be seen as an inspiration for the
constructive ones are also discussed here. In \textbf{Section 3},
the main one, we are going to give a critical review of some of the
published results on sets and semigroups with apartness as well as
the solutions to some of the open problems  on sets and semigroups
with apartness. One of the main results, Theorem~\ref{senegcom},
gives a complete description of the relationships between
distinguished subsets of a set with apartness, which, in turn,
justifies the constructive order theory we develop with those
subsets with the main role in that framework. By
Proposition~\ref{p11-bhm}, if  any left/right-class of a
co-quasiorder defined on a set with apartness  is  a (strongly)
detachable subset then the limited principle of omniscience,
\textbf{LPO}, holds. This shows that Theorem 4.1 (and Lemma 3.2
important for its proof) set in \cite{cmr2}
 cannot be proved in \textbf{BISH} without the logic of constant domains \textbf{CD}.
Within intuitionistic logic, we can prove its weaker version,
Theorem~\ref{thm4.11}, which is another important result of this
section. As for \textbf{QSP}, for sets and semigroups with
apartness, we achieve a little progress in that direction.
Theorem~\ref{BHM-1}, the key theorem for the \textbf{QSP}'s
solution, generalizes the similar ones from \cite{cmr1},
\cite{mmscr}. In addition, as a generalization of the first
apartness isomorphism theorem, the new theorem,
Theorem~\ref{basicfactor}, the second apartness isomorphism theorem
for sets with apartness, is formulated and proved. Finally, in
\textbf{Section 4}, examples of some already existing applications
as well as  future possible realizations of the ideas presented in
the previous section are given.

More background on constructive mathematics can be found
in \cite{mjb}, \cite{eb}, \cite{dbv2}, \cite{t1}. The standard
reference for constructive algebra is \cite{mrr}. For the classical
case see  \cite{mm}, \cite{p}. Examples of applications of these
theoretical concepts can be found in \cite{bb}, \cite{bh},
\cite{gc}, \cite{gpwz2},  \cite{mam}.

\section{Preliminaries: background, known results  and motivation}\label{s-2}

Starting our work on constructive semigroups with apartness, as
 pointed out above,  we  faced   an algebraically completely new
area. What we had in ``hand'' at that moment were the experience and
knowledge coming from the classical semigroup theory, other
constructive mathematics disciplines, and  computer science.

\subsection{Algebra and semigroups within \textbf{CLASS}}\label{s-21}

{\leftskip 1cm \scriptsize ``I was just going to say, when I was
interrupted, that one of the many ways of classifying minds is under
the heads of arithmetical and algebraical intellects. All economical
and practical wisdom is an extension of the following arithmetical
formula: 2 + 2 = 4. Every philosophical proposition has the more
general character of the expression a + b = c. We are mere
operatives, empirics, and egotists until we learn to think in
letters instead of figures .'' \vspace*{-0.3cm}}
\begin{flushright}
\scriptsize Oliver Wendell Holmes: \emph{The Autocrat of the
Breakfast Table} \end{flushright}

A very short account of   abstract algebra and its development will
be given here. Over the course of the 19th century, algebra made a
transition from a subject concerned entirely with the solution of
mostly polynomial equations to a discipline that deals with general
structures within mathematics.  The term abstract algebra as a name
 for this area appeared in the early 20th century.  ``In studying abstract
algebra, a so called  axiomatic approach is taken; that is, we take
a collection of objects $S$ and assume some rules about their
structure. These rules are called axioms. Using the axioms for $S$,
we wish to derive other information about $S$ by using logical
arguments. We require that our axioms be consistent; that is, they
should not contradict one another. We also demand that there not be
to many axioms. If a system of axioms is too restrictive, there will
be few examples of the algebraic structure,''~\cite{jtw}.

 An \textbf{\emph{algebraic structure}} can be, informally,
described as a set of some elements of objects with some (not
necessarily, but often, binary) operations for combining them. A
\emph{set} is considered as a primitive notion which one does not
define. We will take the intuitive approach that a set is some given
collection of objects, called elements or members of the set. The
cartesian product of a set $S$ with itself, $S\times S$, is of
special importance. A subset $\rho$ of $S\times S$, or,
equivalently, a property applicable to elements of $S\times S$, is
called a \emph{binary relation on} $S$. The ordered pair $(S, \rho)$
is a particular \emph{relational structure}. In general,  there are
many properties (for example: reflexivity, symmetry, transitivity)
that binary relations may satisfy on a given set. As usual, for a
relation $\rho$ on $S$, $a\rho = \{x\in S : (a,x)\in\rho \}$, and
$\rho  a   = \{x\in S : (x,a)\in\rho \}$ are the left and the right
$\rho$-class of the element $a\in S$ respectively. The concept of an
\emph{equivalence}, i.e. reflexive, symmetric and transitive
relation, is an extremely important one and plays a central role in
mathematics. If $\varepsilon$ is an equivalence on a set $S$, then
$S/\varepsilon = \{x\varepsilon : x\in S\}$ is called the
\emph{quotient set of} $S$ \emph{by} $\varepsilon$. Classifying
objects according to some property is a frequent procedure in many
fields. Grouping elements in ``a company'' so that elements  in each
group are of the same prescribed property as performed by
equivalence relations, and the classification gives the
corresponding quotient sets. Thus, abstract algebra can show us how
to identify objects with the same properties properly - we have to
switch to a quotient structure (technique applicable, for example,
to abstract data type theory).

Some fundamental  concepts in abstract algebra are: set and
operation(s) defined on that set; certain algebraic laws that all
elements of  a structure can respect, such as, for example,
associativity, commutativity;  some elements with special behaviour
in connection with operation(s): idempotent elements, identity
element, inverse elements, ... Combining the above concepts gives
some  of the most important structures in mathematics:  groups,
rings, semigroups, ... Centred around an algebraic structure are
notions of: substructure, homomorphism, isomorphism, congruence,
quotient structure. A mapping  between  two algebraic structures of
the same type, that preserves the operation(s) or  is compatible
with the operation(s) of the structures is called
\emph{homomorphism}. Homomorphisms are essential to the study of any
class of algebraic objects. An equivalence relation $\rho$ on an
algebraic structure $S$ (such as a group, a ring, or a semigroup)
that is compatible with the structure is called a \emph{congruence}.
Within \textbf{CLASS} the quotient set $S /\rho$ becomes the
structure of the same type in a natural way. The relationship
between quotients, homomorphisms and congruences is described by the
celebrated \textbf{\emph{isomorphism theorems}}. Isomorphism
theorems are a general and important foundational part of abstract
and universal algebra.

``Algebra is beautiful. It is so beautiful that many people forget
that algebra can be very useful as well,'' \cite{lp}. Abstract
algebra is the highest level of abstraction. Understanding it means,
among other things, that one can think more clearly, more
efficiently. With the development of computing in the last several
decades, applications that involve  algebraic structures have become
increasingly important. To mention a few, lot of data structures
form monoids (semigroups with the identity element); algebraic
properties are  important for parallel execution of programs - for
example, combining a list of items with some binary operators can be
easily parallelized if that operator is associative (commutativity
is often required as well). Examples of applications given above
lead to \emph{semigroups}. In fact, following \cite{gl}, (free)
\emph{semigroups} are the first mathematical objects every human
being has to deal with - even before attending  school.

\subsubsection {More about the theory of semigroups}\label{s-211}

A \textbf{\emph{semigroup}} is an algebraic structure consisting of
a set with an associative binary operation defined on it.  In the
history of mathematics, \emph{the algebraic theory of semigroups} is
a relative newcomer, with the theory proper developing only in the
second half of the twentieth century. Historically, it can be viewed
as an algebraic abstraction of the properties of the composition of
transformations on a set.   But, there is no doubt about it, the
main sources came from group  and ring theories. However, semigroups
are not a direct generalization of group theory as well as ring
theory. Let us remember: congruences on groups are uniquely
determined by their normal subgroups, and, on the other hand, there
is a bijection between congruences and the ideals of  rings. The
study of congruences on semigroups is more complicated - no such
device is available. One must study congruences as such. Thus,
semigroups do not much resemble groups and rings. In fact,
semigroups do not much resemble any other algebraic structure.
Nowadays, semigroup theory is an enormously broad topic and has
advanced on a very broad front.
 Following \cite{amgp}, ``a huge variety of structures
studied by mathematicians are sets endowed with associative binary
operation.''  Even more, it appears that ``semigroup theory provides
a convenient general framework for unifying and clarifying a number
of topics in fields that are seen, at first sight, unrelated'',
\cite{gl}.

The capability and flexibility of semigroups from the point of view
of modeling and   problem-solving in extremely diverse situations
have been already pointed out, and  interesting new algebraic ideas
arise with binary applications  and connections to other areas of
mathematics and sciences. Let us start our short journey through the
applications of semigroups with the connections to the algebra of
relations. The theory of  semigroups is one of the main algebraic
tools used in the theory of automata as well as the theory of formal
languages.  According to some authors, the role of the theory of
semigroups for theoretical computer science is compared with the one
which the philosophy  has with the respect to science in general.
Some investigations on transformation semigroups of synchronizing
automata show up interesting implications for various applications
for robotics, or more precisely, robotic manipulation.  On the other
hand, areas such as biology, biochemistry, sociology also make use of
 semigroups. For example, semigroups can be used in biology to describe certain aspects
in the crossing of organisms, in genetics, and in consideration of
metabolisms. Following \cite{boo}, \cite{lp}, the sociology includes
the study of human interactive behaviour in group situations, in
particular in underlying structures of societies. The study of such
relations can be elegantly formulated in the language of semigroups.
The book \cite{bjp} is written for social scientists with the main
aim to help readers to apply ``interesting and powerful concepts''
of semigroup theory to their own fields of expertise. However, the
list of applications given above  does not purport to mention all of
the existing applications of semigroup theory.
  As it is pointed out in \cite{amgp}, it is often the case  that
  ``most applications make minimal use of the reach of the (classical)
  algebraic theory of semigroups.'' There is need for
study of some more structures of semigroups which can find
applications in different areas, \cite{rd}. This can bring
 very pretty mathematics to illustrate the
interplay between certain scientific areas and semigroup-theoretic
techniques. This type of research can be a topic on its own for
certain types of papers.


In what follows some known results from the classical semigroup
theory useful for our development   will be presented.

\medskip
\emph{A semigroup} $(S,  \cdot )$ is a set $S$ together with an
associative binary operation $\cdot$\begin{itemize}
\item[(A)] $ \enspace\quad (\forall  a,b,c \in S) \ [(a\cdot b)\cdot c\, =\, a\cdot (b\cdot
c)]$.
\end{itemize}
\noindent Where the nature of the
 multiplications is clear from the context, it is written $S$ rather
than $(S, \, \cdot )$.  Frequently, $xy$ is written rather than
$x\,\cdot\,y$.

Various approaches  have been developed over the years to construct
frameworks for understanding the structure of semigroups. The
fundamental concepts of  semigroup theory elaborated by
Suschekewitsch, Rees, Green, Clifford and other pioneers include as
one of the main tools, \emph{Green's quasiorders} (and equivalences
generated by them), defined by the multiplication of semigroups  and
in terms of special subsemigroups. The notion of an order plays an
important role throughout mathematics as well as in some adjacent
disciplines such as logic and computer science. Order theory
provides one of the most basic tools of semigroup theory as well. In
particular, the structure of semigroups is usually most clearly
revealed through the analysis of the behaviour of  their appropriate
orders. A pure order theory is concerned with a single undefined
binary relation $\rho$. This relation is assumed to have certain
properties (such as, for example, reflexivity, transitivity,
symmetry, antisymmetry), the most basic of which leads to the
concept of \emph{quasiorder}. A quasiorder plays a central role
throughout this short exposition with the fundamental concepts being
introduced whenever possible in their natural properties.

\medskip
\textbf{Distinguishing subsets}

\smallskip
A number of subsets of a semigroup enjoy special properties relative
to the multiplication. A subset $T$ of a semigroup $S$ is:

\begin{itemize}
  \item \emph{completely isolated} if $ab\in T$  implies $a\in T$ or $b\in T$ for any $a,b\in S$,
    \item \emph{convex} if $ab\in T$ implies both $a, b\in T$  for any $a,b\in S$,
   \item \emph{subsemigroup} if for any $a,b\in T$ we have  $ab\in T$,
    \item \emph{ideal} if for any $a\in T$ and $s\in S$ we have  $as,sa\in
    T$.
\end{itemize}
A  subsemigroup  $T$ of $S$ which is convex (resp. completely
isolated) as a subset is called a \emph{convex} (resp.
\emph{completely isolated}) \emph{subsemigroup}. In an analogous
way, we define a complex (completely isolated) ideal  of $S$. Some
of their existing properties are listed in the lemma below.

\begin{lem}\label{ccic}
Let $S$ be a semigroup. Then:

\begin{itemize}
\item[\emph{(i)}] An ideal $I$ of $S$ is completely isolated if and
only if $\neg I = S\setminus I$ is either a subsemigroup of $S$ or
is empty.
\item[\emph{(ii)}] An nonempty subset $F$ of $S$ is convex if and
only if $\neg F = S\setminus F$ is either a completely isolated
ideal or is empty.
\end{itemize}
\end{lem}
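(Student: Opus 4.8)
The plan is to prove both equivalences by pure propositional reasoning. We are in \textbf{CLASS}, so \textbf{LEM} is available, and each of the membership implications defining ``completely isolated'', ``subsemigroup'', ``convex'' and ``ideal'' contraposes to an implication about the set-theoretic complement; one then reads off the statement using De Morgan's laws, together with a small amount of bookkeeping for the degenerate case in which the complement happens to be empty. The slogan behind the lemma is that these four notions pair up under complementation inside a semigroup: ``completely isolated'' with ``subsemigroup'', and ``convex'' with ``ideal''.

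For (i): the condition ``$ab\in I$ implies $a\in I$ or $b\in I$'' is, by contraposition, equivalent to ``$a\notin I$ and $b\notin I$ together imply $ab\notin I$'', i.e.\ to ``$a,b\in\neg I$ imply $ab\in\neg I$''. When $\neg I\neq\emptyset$ this says precisely that $\neg I$ is closed under the multiplication of $S$, i.e.\ that $\neg I$ is a subsemigroup; when $\neg I=\emptyset$ we have $I=S$, which is trivially completely isolated, and this is the ``or is empty'' alternative. Both implications of (i) follow at once. (One notices along the way that the argument never uses the hypothesis that $I$ is an ideal; it is recorded because completely isolated \emph{ideals} are the objects of interest and because it is the natural companion of part (ii).)

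For (ii): argue dually. The defining implication of convexity, ``$ab\in F$ implies $a\in F$ and $b\in F$'', contraposes to ``$a\notin F$ or $b\notin F$ implies $ab\notin F$''. Reading the first disjunct with an element $a\in\neg F$ and an arbitrary $s\in S$ in the second slot, and then symmetrically, shows that $\neg F$ absorbs products on the left and on the right; hence $\neg F$ is an ideal whenever it is nonempty, while $\neg F=\emptyset$ gives the ``or is empty'' case ($F=S$). Conversely, if $\neg F$ is an ideal then the very same implication, read back through De Morgan, returns the convexity of $F$. To sharpen ``ideal'' to ``completely isolated ideal'' one applies part (i) to the ideal $\neg F$: it is completely isolated exactly when its complement $F$ is a subsemigroup (or empty), so the ``completely isolated'' qualifier is precisely what matches up on the two sides of the equivalence, and the empty case is disposed of as before.

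I expect the only real care to lie in (a) getting the directions of the contrapositives right --- complete isolation of a set corresponds to closure of the complement under products, whereas convexity of a set corresponds to the two-sided absorption (ideal) property of the complement --- and (b) treating the boundary cases $I=S$ and $F=S$ uniformly, since that is where the ``or is empty'' clauses come from. Beyond that I foresee no serious obstacle; the whole lemma is an exercise in complementation.
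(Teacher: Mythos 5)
Part (i) of your proposal is fine: the contraposition argument is the standard one, and your parenthetical observation that the ideal hypothesis of $I$ is never actually used there is correct. (The paper itself gives no proof of this lemma --- it is quoted from the classical literature --- so your argument has to stand on its own.)

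Part (ii) has a genuine gap at the final step. Your contrapositions correctly establish the equivalence ``$F$ is convex $\iff$ $\neg F$ is an ideal or is empty.'' But the statement asks for $\neg F$ to be a \emph{completely isolated} ideal, and your attempt to ``sharpen'' via part (i) does not close that gap: by (i), the ideal $\neg F$ is completely isolated exactly when $F=\neg(\neg F)$ is a subsemigroup (or empty), and convexity of $F$, as defined in the paper, does not imply that $F$ is a subsemigroup. There is nothing on the left-hand side of (ii) for the ``completely isolated'' qualifier to ``match up'' with, contrary to what you assert. Concretely, let $S=\{0,x,y\}$ be the null semigroup ($uv=0$ for all $u,v\in S$) and $F=\{x,y\}$. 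Then $F$ is nonempty and vacuously convex (no product ever lands in $F$), and $\neg F=\{0\}$ is an ideal, but $\{0\}$ is not completely isolated, since $xy=0\in\{0\}$ while $x\notin\{0\}$ and $y\notin\{0\}$. So the forward implication of (ii), read literally with the paper's definition of ``convex,'' fails, and your proof glosses over exactly the point where it fails. The classical result that \emph{is} true --- and that your argument does prove once the missing hypothesis is inserted --- is the filter versus completely prime ideal correspondence: a nonempty subset $F$ is a convex \emph{subsemigroup} (a filter) if and only if $\neg F$ is a completely isolated ideal or is empty; it is the subsemigroup property of $F$ that dualizes, via your part (i), to complete isolation of $\neg F$, while convexity of $F$ dualizes only to the ideal property. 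You should either add that hypothesis explicitly or flag the discrepancy, rather than asserting that the qualifiers on the two sides correspond.
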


Within \textbf{CLASS } semigroups can historically  be viewed as an
algebraic abstraction of the transformations on a set. Of great
importance is the role of the subsemigroups given above  in
describing the structure of transformation semigroups. We refer the
reader  to \cite{gm}, \cite{ls} for more details about definitions,
properties and applications of such  subsemigroups.

\medskip
Describing a semigroup and its structure is a formidable task. There
are many different techniques developed for that purpose.
\emph{Semilattice decomposition of semigroups} is one of the methods
with general applications. For more information on semilattice
decomposition of semigroups see  \cite{mm},  \cite{p}. It is shown
in \cite{p} that this method leads to the study of completely
isolated  ideals and convex subsemigroups.

\eject
\textbf{ Quasiorders }

\smallskip
By   definition, a binary relation $\rho $ of set $S$ is a subset of
$S\times S$. To describe the relation defined on a semigroup $S$, we
have to say which order pairs belong to $\rho$. In other words, for
any $a\in S$, we have to know the   following subsets of $S$:

$a\rho \,  =  \, \{x\in S \, \mid \, (a,x)\in\rho \}$,

$\rho  a \,  =  \, \{x\in S \, \mid \, (x,a)\in\rho \}$,

\noindent called the left and right $\rho$-class of an element $a$.
That is how we connect a study of binary relations defined on a
given set with its subsets.

\medskip
The relations defined on a semigroup $S$ are distinguished one from
another according to the behaviour of their related elements  to the
multiplication. A relation $\rho$ defined on a semigroup $S$ is
\begin{itemize}
\item \emph{positive} if $(a,ab), (a,ba)\in\rho$, for any $a,b\in
S$,
\item \emph{with common multiply property}, or, also called for short, with
\emph{cm-property} if $(a,c), (b,c)\in \rho$ implies $(ab,c)\in
\rho$, for any $a,b,c\in S$.
\item \emph{with compatibility property} if
$(x,y), (u,v)\in\rho$ implies $(xu,yv)\in\rho$ for any $x,y,u,v\in
S$.
\end{itemize}

In the sequel, the positive quasiorders will also be considered.
Recall that the \emph{division relation} $|$ on a semigroup $S$,
defined by
$$a\,|\,b \ \stackrel{\rm def}{\Leftrightarrow} \ (\exists x,y\in S^1) \ b=xay,$$
for $a,b\in S$, is the smallest positive quasi-order defined on $S$.
The positive quasi-orders were introduced in \cite{s}. In \cite{t}
their link to semilattice decompositions of semigroups was
established. In \cite{msp1} their possible applications in
psychology were announced. Finally, close connections between
positive quasiorders and subsemigroups defined above were given in
\cite{bc}.  Here we mention some of these results:

\begin{teo}\label{bsc1}
Let $\rho$ be a quasiorder on $S$. The following conditions on a
semigroup $S$ are equivalent:
\begin{itemize}
\item[\emph{(i)}]  $\rho$ is a positive quasiorder;
\item[\emph{(ii)}] $(\forall a,b\in S) \, (ab)\rho\subseteq a\rho \cap
b\rho$;
\item[\emph{(iii)}] $(\forall a,b\in S) \, \rho a\cup \rho b\subseteq \rho (ab)$;
\item[\emph{(iv)}] $a\rho$  is an ideal for any $a\in S$;
\item[\emph{(v)}]  $\rho a$ is a convex subset of $S$ for any $a\in
S$.
\end{itemize}
\end{teo}

\begin{teo}\label{bsc2}
Let $\rho$ be a quasiorder on $S$. The following conditions on a
semigroup $S$ are equivalent:
\begin{itemize}
\item[\emph{(i)}]  $\rho$ is a positive quasiorder with cm-property;
\item[\emph{(ii)}] $\rho a$ is a convex subsemigroup of $S$ for any
$a\in S$;
\item[\emph{(iii)}]  $(\forall a,b\in S) \,  (ab)\rho = a\rho \cap
b\rho$.
\end{itemize}
\end{teo}

Finally, we can say that, from the point of view of the classical
semigroup theory, the interrelations between the following notions
are of interest:
\begin{itemize}
\itemsep=0.9pt
  \item semilattice decomposition of semigroups,
  \item completely isolated and convex subsemigroups and/or ideals,
  \item positive quasiorders.
\end{itemize}

\medskip
\textbf{Isomorphism theorems for semigroups}
\smallskip

Let us remember that congruences on groups are uniquely determined
by  their normal subgroups, and, on the other hand, there is a
bijection between congruences and the ideals of  rings. The study of
congruences on semigroups is more complicated - no such device is
available. One must study congruences as such. A \emph{congruence}
$\rho$ on a semigroup $S$ is an equivalence, i.e. symmetric
quasiorder,  with the \emph{compatibility property}. Classically,
the quotient set $S/\rho$ is then provided with a semigroup
structure.

\begin{teo}\label{sth21}
Let $S$ be a semigroup and $\rho$ a congruence on it.Then $S/\rho$
is a  semigroup with respect to the operation defined by
 $(x\rho)(y\rho) = (xy)\rho$, and the  mapping $\pi\,:\, S\,\rightarrow\,
S/\rho$, $\pi(x)=x\rho$, $x\in S$, is an \emph{onto} homomorphism.
\end{teo}

Provided that the (\emph{first}) \emph{isomorphism theorem for
semigroups} follows.

\begin{teo}\label{sth2}
Let $f: S \rightarrow T$ be a homomorphism between semigroups $S$
and $T$.  Then
\begin{itemize}
\itemsep=0.95pt
\item[\emph{(i)}] $ker\,f = f\circ f^{-1}=  \{(x,y)\in S\times S\,:\,
f(x)=f(y)\}$ is a congruence on $S$;
\item[\emph{(ii)}] the mapping $\theta \,:\, S/ker\,f \, \rightarrow
\, T$ defined by $\theta(x(ker\,f)) = f(x)$ is  an embedding such
that $f=\theta\circ\pi$;
\item[\emph{(iii)}]  if $f$ maps $S$ onto $T$, then $\theta$ is
an isomorphism.
\end{itemize}
\end{teo}

The theorem which follows is concerned with a more general
situation.

\begin{teo}\label{sth4}
Let $\rho$ be a congruence on a semigroup  $S$, and let  $f: S
\rightarrow T$ be a homomorphism between semigroups $S$ and $T$ such
that $\rho\subseteq ker\,f$. Then there exists a homomorphism of
semigroups $\theta : S/\rho  \rightarrow T$, such that
$f=\theta\circ\pi$. If, in addition,
 $f$  is  \emph{onto}, then $\theta$ is an isomorphism.
\end{teo}

\subsection{Constructive algebra}\label{s-22}

Constructive algebra is a relatively old discipline developed among
others by L. Kronecker, van der Waerden, A. Heyting. For more
information on the history see \cite{mrr}, \cite{t1}.
 One of the main topics in constructive algebra is constructive
algebraic structures with the relation of (tight) apartness $\#$,
the second most important relation in constructive mathematics. The
principal novelty in treating basic algebraic structures
constructively is that (tight) apartness becomes a fundamental
notion. (Consider the reals: we cannot assert that $x^{-1}$ exists
unless we know that $x$ is apart from zero, i.e. $|x| >0$ -
constructively that is not the same thing as $x\neq 0$. Furthermore,
in fields $x^{-1}$ exists only if $x$ is apart from 0, \cite{mjb})
 The study of algebraic structures in the presence of \emph{tight}
apartness  was started by  Heyting, \cite{ah3}. Heyting gave the
theory a firm base in \cite{ah4}. Roughly, the descriptive
definition of a structure with apartness includes two main parts:

\begin{itemize}
\item the notion of a certain classical algebraic structure is
straightforwardly adopted;

\item a structure is equipped with an apartness with standard operations
respecting that apartness.
\end{itemize}

Quotient structures are not part of \textbf{BISH}. A quotient
structure does not, in general, have a natural apartness relation.
So, \emph{the Quotient Structure Problem} - \textbf{QSP}  is one of
the very first problems which has to be considered for any structure
with apartness. Talking about the \textbf{QSP }for
sets and semigroups with apartness and its history - solution of the
\textbf{QSP} for sets with apartness is for the first time given in
\cite{cmr1}. The \textbf{QSP}'s solutions  for groups with
\emph{tight} apartness and commutative rings with \emph{tight}
apartness presented in \cite{mjb}, \cite{ddv}, \cite{fr2},
\cite{dar-88}, \cite{dar-88-1}, \cite{wr1}, \cite{t1} inspired us to
give solutions of \textbf{QSP} for \emph{sets  with apartness} in
2013, \cite{cmr1}, which, in turn, imply the solution of QSP for
semigroups with apartness as its consequence.

  A lot of ideas, notions and notations come from, for example,
the constructive analysis, and, especially, from the constructive
topology, as well as from constructive theories of groups and rings
with \emph{tight} apartness. Although  the area of constuctive
semigroups with apartness is still in its  infancy, we can already
conclude that, similarly to the clasical case, the semigroups with
apartness do not much resemble groups and rings. In fact, they do
not much resemble any other constructive  algebraic structures with
apartness.

\subsection{Computer science}\label{s-23}

  It is well known that formalization is a general method in
science. Although it was created as a technique in logic and
mathematics, it has entered into engineering as well. Formal
engineering methods can be understood as mathematically-based
techniques for the functional specification, development and
verification in the engineering of software and hardware systems.
Despite some initial suspicion, it  was proved that formal methods
are powerful enough to deal with real life systems. For example, it
is shown that ``software of the size and complexity as we find in
modern cars today can be formally specified and verified by applying
computer based tools for modeling and interactive theorem proving,''
\cite{mb}.

Proof assistants are computer systems which give a user the
possibility to do mathematics on a computer: from (numerical and
symbolical) computing aspects to the aspects of defining and
proving.  The latter ones, doing proofs, are  the main focus. It is
believed that, besides their great future within the area of
mathematics formalization, their applications within computer-aided
modelling and verification of the systems are and will be more
important.  One of the most popular, with the intuitionistic
background, is the proof assistant computer system \emph{Coq}.

Coq is used  for formal proves of   well known mathematical
theorems, such as, for example,  the Fundamental Theorem of Algebra,
FTA, \cite{gwz}. For that purpose, the  \emph{constructive algebraic
hierarchy} for Coq was developed, \cite{gpwz2}, consisting of
constructive basic algebraic structures (semigroups, monoids,
groups, rings, fields) with \emph{tight} apartness. In addition, all
these structures are limited to the commutative case. As it is
noticed in \cite{gpwz2} ``that algebraic hierarchy has been designed
to prove FTA. This means that it is not rich  as one would like. For
instance, we do not have noncommutative structure because they did
not occur in our work.''  ...  So, a question which arises from this
is:
\begin{quote}
\emph{What can be done in connection with noncommutative semigroups
with apartness where apartness
 is  only ``ordinary'' and  not the tight one?}
\end{quote}
We put noncommutative constructive semigroups with ``ordinary''
apartness in the core of our study, proving first, of course, that
such  semigroups do exist, \cite{cmr1}. As in \cite{bdb}, we made
``every effort to follow classical development along the lines
suggested by familiar classical theories or in all together new
directions.''

The results of our several years long investigations, \cite{cmr1},
\cite{cmr2}, \cite{mmscr},  present a semigroup facet of some
relatively well established directions of constructive mathematics
which, to the best of our knowledge, have not yet been considered
within the semigroup community. The initial step towards grounding
the theory done through our papers will be developed through the
scope of this paper. We are going to give a critical review of some
of those results as well as the solutions to some of the open
problems arising from those papers.

\section{Main results: sets and semigoups with apartness}\label{s-3}

Before starting our constructive examination of sets and semigroups
with apartness, we should clarify its setting. By constructive
mathematics we mean Bishop-style mathematics, \textbf{BISH}. We
adopt Fred Richman's viewpoint, \cite{fr1}, where  constructive
mathematics is simply mathematics carried out with intuitionistic
logic. The Bishop-style of constructive mathematics enables one to
interpret the results both in classical mathematics, \textbf{CLASS},
and other varieties of constructivism. We regard classical
mathematics as Bishop-style mathematics plus the law of excluded
middle, \textbf{LEM}.  This logical principle   can be regarded as
the main source of nonconstructivity. It was Brouwer, \cite{lejb},
 who first observed that  \textbf{LEM} was extended without
justification to statements about infinite sets. Several
consequences of \textbf{LEM} are not accepted in Bishop's
constructivism. We will mention three such nonconstructive
principles - the ones which will be used latter.

\begin{itemize}
\item \textbf{The limited principle of omniscience},
\textbf{LPO}: \ for each binary sequence $\left( a_{n}\right)
_{n\geq1}$, either  $a_{n}=0$ for all $n\in \mathbb{N}$, or else
there exists $n$ with $a_{n}=1$.

\item
\textbf{The lesser limited principle of omniscience},
\textbf{LLPO}: if $(a_n)_{n\in \mathbb{N}}$ is a binary sequence
containing at most one term equal to 1, then either $a_{2n}=0$ for
all $n\in \mathbb{N}$, or else $a_{2n+1}=0$ for all $n\in
\mathbb{N}$.

\item \textbf{ Markov's principle}, \textbf{MP}: \ For each
binary sequence $\left( a_{n}\right) _{n\geq1}$, if it is impossible
that $a_{n}=0$ for all $n\in \mathbb{N}$, then there exists $n$ with
$a_{n}=1$.
\end{itemize}

\begin{rem}\label{lpo-bhm}
{\rm\textbf{LPO}} is equivalent  to the decidability of equality on
the real number line $\mathbb{R}$.
$$\forall_{x\in \mathbb{R}}\, (x=0 \vee x\neq 0).$$
A detailed constructive study of $\mathbb{R}$ can be found in
\cite{dsb}.
\end{rem}

Within  constructive mathematics,  a statement $P$, as in classical
mathematics, can be disproved by giving a counterexample. However,
it is also possible to give a \emph{Brouwerian counterexample} to
show that the statement is nonconstructive. A Brouwerian
counterexample to a statement $P$ is a constructive proof that $P$
implies some nonconstructive principle, such as, for example,
\textbf{LEM}, and its weaker versions \textbf{LPO}, \textbf{LLPO},
\textbf{MP}. It is not a counterexample in the true sense of the
word - it is just an indication that $P$ does not admit a
constructive proof. More details about nonconstructive principles
and various classical theorems that are not constructively valid can
be found in \cite{ih}.

\subsection{Set with apartness}\label{s-31}

The cornerstones for \textbf{BISH} include the notion of positive
integers, sets and functions. The set $\mathbb{N}$ of positive
numbers is regarded as a basic set, and it is assumed that the
positive numbers have the usual algebraic and order properties,
including  mathematical induction.

 Contrary to the classical case, a set exists only when it is
defined. To define a set  $S$, we have to give a property that
enables us to construct members of $S$, and to describe the equality
$=$ between elements of $S$ -- which is a matter of convention,
except that it must be an  equivalence. A set $(S, =)$ is an
\emph{inhabited} set if we can construct an element of $S$. The
distinction between the notions of a nonempty set and an inhabited
set is a key in constructive set theories. The notion of equality of
different sets is not defined. The only way in which elements of two
different sets can be regarded as equal is by requiring them to be
subsets of a third set. For this reason, the operations of union and
intersection are defined only for sets which are given as subsets of
a given set. There is another problem   to face   when we consider
families of sets that are closed under a suitable operation of
complementation. Following \cite{bdb} ``we do not wish to define
complementation in the terms of negation; but on the other hand,
this seems to be the only method available. The way out of this
awkward position is to have a very flexible notion based on the
concept of \emph{a set with apartness}.''

A property $P$, which is applicable to the elements of a set $S$,
 determines a subset of $S$ denoted by $\{x\in S : P(x)\}$.
 Furthermore, we will be interested only in properties $P(x)$
which are \emph{extensional} in the sense that for all $x_1,x_2\in
S$ with $x_1 = x_2$, $P(x_1)$ and $P(x_2)$ are equivalent.
Informally, it means that ``it does not depend on the particular
description by which $x$ is given to us'', \cite{dbv2}.

An inhabited subset of $S\times S$, or, equivalently, a property
applicable to elements of $S\times S$, is called a \emph{binary
relation} on $S$. In general,  there are many properties that binary
relations may satisfy on a given set. For instance, reflexivity,
symmetry, transitivity, irreflexivity, strong irreflexivity,
co-transitivity  play a role under constructive rules.

In \textbf{CLASS},   equivalence is the natural generalization of
equality.  A theory with equivalence involves
 equivalence and functions, and relations respecting this equivalence.
In constructive mathematics the same   works without difficulty,
\cite{wr}.

 Many sets  come with a binary relation called inequality satisfying certain properties, and
denoted by $\neq$, $\#$ or $\not\backsimeq$. In general, more
computational information is required to distinguish elements of a
set $S$, than to show that elements are equal. Comparing with
\textbf{CLASS}, the situation for inequality
 is more complicated. There are different types of
inequalities (denial inequality, diversity, apartness, tight
apartness - to mention a few), some of them completely independent,
which only in \textbf{CLASS} are equal to one standard inequality.
So, in \textbf{CLASS} the study of the equivalence relation
suffices, but in constructive mathematics, an inequality becomes a
``basic notion in intuitionistic axiomatics''.  Apartness, as a
positive version of inequality, ``is yet another fundamental notion
developed in intuitionism which shows up in computer science,''
\cite{bj}.

\medskip
Let $(S,=)$ be an \emph{inhabited} set. By an
\textbf{\emph{apartness}} on $S$  we mean a binary relation $\# $ on
$S$ which satisfies the axioms of irreflexivity, symmetry and
cotransitivity:
\begin{itemize}
\item[(Ap1)] $\neg(x \# x)$

\item[(Ap2)] $x \# y \ \Rightarrow\ y \# x$,

\item[(Ap3)] $x\# z\ \Rightarrow\ \forall _y\,(x\# y\,\vee\,y\# z).$
\end{itemize}
If $x\# y$, then $x$ and $y$ are different, or distinct.
 Roughly speaking, $x= y$ means that we have a proof that $x$ equals $y$ while
$x\# y$ means that we have a proof that $x$ and $y$ are different.
Therefore, the negation of $x= y$ does not necessarily imply that
$x\# y$ and vice versa: given $x$ and $y$, we may have neither a
proof that $x= y$ nor a proof that $x\# y$.

The negation of apartness is an equivalence
$(\approx)\stackrel{\text def}{=}(\neg\,\#)\,$ called \emph{weak
equality} on $S$.

\begin{rem}\label{aset1}
The statement that every equivalence relation is the negation of
some apartness relation is equivalent to the excluded middle. The
statement that the negation of an equivalence relation is always an
apartness relation is equivalent to the nonconstructive de Morgan
law.
\end{rem}

The apartness on a set $S$ is \emph{tight} if
\begin{itemize}
\item[(Ap4)] $\neg(x \# y) \ \Rightarrow\ x=y$.
\end{itemize}
 Apartness is tight just when
$\approx$ and $=$ are the same, that is $\neg(x \# y) \
\Leftrightarrow\ x=y$.

In some books and papers,  such as \cite{t1}, the term
``preapartness'' is used for an apartness relation, while
``apartness'' means tight apartness. The tight apartness on the real
numbers was introduced by L. E. J. Brouwer in the early 1920s.
Brouwer introduced the notion of apartness as a positive
intuitionistic basic concept. A formal treatment of apartness
relations began with A. Heyting's formalization of elementary
intuitionistic geometry in \cite{ah1}. The intuitionistic
axiomatization of apartness is given in~\cite{ah2}.

\medskip
 By extensionality, we have
\begin{itemize}
\item[(Ap5)] $x \# y \,\wedge\,y = z \ \Rightarrow\, x \# z$,
\end{itemize}
\noindent the equivalent form  of which is

\begin{itemize}
  \leftskip=1.7mm
\item[(Ap5')] $x \# y \,\wedge\,x = x' \,\wedge\, y=y'\ \Rightarrow\,
x' \# y'$.
\end{itemize}

A \emph{set with apartness} $(S,=,\#)$ is the starting point for our
 considerations, and will be simply denoted by $S$.
The existence of an apartness relation on a structure often  gives
rise to an  apartness relation on another structure. For example,
given two sets with apartness $(S,= _S,\#_S)$ and $(T,=_T,\#_T)$, it
is permissible to construct the  set of mappings between them.
Following \cite{dbv2}, \emph{a mapping}
$f:S\rightarrow T$ is an algorithm which produces an element $f(x)$
of $T$ when applied to an element $x$ of $S$, which is extensional,
that is\vspace*{-1mm}
\[
\forall_{x,y\in S}\,(x=_{S}y\  \Rightarrow \ f(x)=_{T}f(y)).\mathbf{\ }%
\]
A mapping $f:S\rightarrow T$ is:

- \, \emph{onto} $S$ or \emph{surjection}: $\forall_{y\in T}\
\exists_{x\in S}\ (y=_Tf(x))$;

- \, \emph{one-one} or \emph{injection}:  $\forall_{x,y\in S} \,
(f(x)=_Tf(y)\ \Rightarrow \ x=_Sy)$;

- \, \emph{bijection} between $S$ and $T$: it is a one-one and onto.

An important property applicable to mapping $f$ is that
of strong extensionality. Namely, a  mapping $f:S\rightarrow T$ is
a \emph{strongly extensional} mapping, or, for short, an
\emph{se-mapping}, if
\[
\forall_{x,y\in S}\, (f(x)\#_{T} f(y)\ \Rightarrow\ x\#_{S} y).
\]
(The strong extensionality of all mappings from $\mathbb{R}$ to
$\mathbb{R}$ implies Markov principle, \textbf{MP}, see
\cite{dbv2}.)

 Furthermore, $f$ is

 -  \emph{apartness injective}, shortly \emph{a-injective}:  $\forall_{x,y\in S}\, (x\#_{S}y\  \Rightarrow \
f(x)\#_{T} f(y))$;

-  \emph{apartness bijective}:  a-injective, se-bijective.

Given two sets with apartness $S$ and $T$ it is permissible to
construct the set of ordered pairs  $(S\times T, =, \#)$ of these
sets defining apartness by $$(s,t)\,\#\, (u,v) \enspace
\stackrel{\rm def}{\Leftrightarrow}\enspace s \,\#_S\, u\,\vee \,
t\, \#_T\, v.$$

\subsubsection{Distinguishing subsets}\label{s-311}

The presence of apartness implies the appearance of different types
of substructures connected to it. Inspired by the constructive
topology with apartness  \cite{dbv2}, we define the relation
$\bowtie$ between an element $x\in S$ and a subset $Y$ of $S$  by
$$x\bowtie Y \ \stackrel{\rm def}{\Leftrightarrow} \ \forall _{y\in Y}  (x\# y).$$
A subset $Y$  of $S$ has two natural complementary subsets:
\emph{the logical complement} of $Y$
$${\neg Y} \stackrel{\rm def}{=} \{x\in S : x\notin Y\}, $$ and \emph{the apartness complement}
 or, shortly,
the \emph{a-complement} of $Y$ $${\sim Y} \,  \stackrel{\rm def}{=}
\, \{x \in S\, : \, x \bowtie Y\}.$$ Denote by $\widetilde{x}$ the
a-complement of the singleton $\{ x\}$. Then it can be easily shown
that $x\in \sim Y$ if and only if $Y\subseteq \widetilde{x}.$

\medskip
If the apartness is not tight we can find subsets $Y$ with ${\sim Y}
\subset {\neg Y}$ as in the following example.
\begin{ex}\label{ex11}
Let  $S=\{a,b,c\}$ be a set with apartness defined by
$\{(a,c),(c,a),(b,c),(c,b)\}$ and let $Y=\{ a\}$. Then the
a-complement $\sim\!\,Y = \{ c\}$ is a proper subset of its logical
complement $\neg Y=\{ b,c\}$.
\end{ex}
For a tight apartness, the two complements are constructive
counterparts of the classical complement. In general, we have ${\sim
Y} \subseteq {\neg Y}.$ However, even for a tight apartness, the
converse inclusion entails the Markov principle, \textbf{MP}. This
result illustrates a main feature of constructive mathematics:
classically equivalent notions could be no longer equivalent
constructively. For which  type of subset  of a set with apartness
do we  have equality between its two complements?
 It turns out that the answer initiated a development of \emph{order theory} for sets and
 semigroups with apartness

\medskip
The complements  are used for the classification of subsets of a
given set. A subset  $Y$  of $S$ is
\begin{itemize}
\itemsep=0.95pt
  \item   \emph{ a detachable} subset  in $S$  or, in short, a
\emph{d-subset} in $S$ if
$$
\forall _{ x\in S} \, (x \in Y \vee x \in {\neg} Y);
$$
\item \emph{a strongly detachable} subset of $S$, shortly \emph{an
sd-subset of} $S$, if
$$
\forall _{ x\in S} \, (x \in Y \vee x \in {\sim} Y),
$$
\item \emph{a quasi-detachable} subset of $S$, shortly \emph{a
qd-subset of} $S$, if
$$
\forall _{ x\in S} \,\forall _{ y\in Y} \, (x \in Y \vee x \# y).
$$
\end{itemize}

The relations between detachable, strongly detachable and
quasi-detachable subsets are  partially described in \cite{mmscr},
Proposition 2.1. A  description of the relationships between those
subsets of set with apartness, which, in turn, justifies the
constructive order theory for sets and semigroups with apartness we
develop, is given in the next theorem which is one of the main
results of this paper.
\begin{teo}\label{senegcom}
Let $Y$ be a subset of $S$. Then:
\begin{itemize}
\item[\emph{(i)}]  Any sd-subset is a qd-subset of $S$. The converse implication entails  {\rm{\textbf{LPO
}}}.
\item[\emph{(ii)}] Any qd-subset $Y$ of $S$ satisfies ${\sim Y}={\neg
Y}$.
\item[\emph{(iii)}]  If any qd-subset is a d-subset, then
{\rm{\textbf{LPO}}} holds.
\item[\emph{(iv)}] If any d-subset is a qd-subset, then {\rm{\textbf{MP}}} holds.
\item[\emph{(v)}]  Any sd-subset is a d-subset of $S$. The converse implication
entails {\rm{\textbf{MP}}}.
\item[\emph{(vi)}] If any   subset of a set with apartness $S$ is a qd-subset, then
{\rm{\textbf{LPO}}} holds.
\end{itemize}
\end{teo}
\begin{proof} (i). \ Let $Y$ be an sd-subset of $S$. Then, applying
the definition and  logical axiom we have
\begin{eqnarray*}
\forall _{ x\in S} \, (x \in Y \vee x \in {\sim} Y)\
&\Leftrightarrow \ &
\forall _{ x\in S} \,(x \in Y \, \vee \,  \forall _{ y\in Y} (x\# y))\\
 &\Rightarrow & \forall _{ x\in S} \,\forall _{ y\in Y} \, (x \in Y \vee x \# y).
\end{eqnarray*}

In order to prove the second part of this statement, we consider the
real number set $\mathbb{R}$ with the usual (tight) apartness and
the subset $Y=\widetilde{0}.$ Then, for each real number $x$ and for
each $y\in Y$ it follows, from the co-transitivity of $\#$, either
$y\# x$ or $x\# 0 $, that is, either $x\in Y$ or $x\# y$.
Consequently, $Y$ is a qd-subset of $\mathbb{R}.$ On the other hand,
if $Y$ is an sd-subset of $\mathbb{R},$ then for each $x\in
\mathbb{R}$, either $x\in Y$ or $x\in \sim Y.$ In the former case,
$x\# 0$ and in the latter $x=0$, hence \textbf{LPO} holds.

\smallskip
(ii). \ Let $Y$ be a qd-subset, and let $a\in \neg Y$. By assumption
we have
$$\forall _{ x\in S} \,\forall _{ y\in Y} \, (x
\in Y \vee x \# y),$$ so substituting $a$ for $x$, we get
$\forall_{y\in Y} \, (a\in Y \vee a\# y)$, and since, by assumption,
$\neg(a\in Y)$, it follows that $a\# y$ for all $y\in Y$. Hence
$a\in \sim Y$. See also \cite{mmscr}.

\smallskip
(iii). \ Let $S$ be the real number set $\mathbb{R}$ with the usual
 apartness $\#$. As in the proof of (i), consider the qd-subset $\widetilde{0}$
of $\mathbb{R}$. If $\widetilde{0}$ is a d-subset of $\mathbb{R}$,
then $x\in \widetilde{0}$ or $\neg(x\in \widetilde{0})$, for all
real numbers $x$. In the latter case $\neg(x\# 0)$,  which is
equivalent to $x=0.$ Thus we obtain the property $\forall_{ x\in
\mathbb{R}}\; (x\# 0\vee x=0)$ which, in turn, is equivalent to
\textbf{LPO}.

\smallskip
(iv).  \ Consider a real number $a$ with $\neg (a=0)$ and let $S$ be
the set $\{ 0,a\}$ endowed with the usual apartness of $\mathbb{R}$.
For $Y=\{ 0\}$, since $0\in Y$ and $a\in \neg Y,$ it follows that
$Y$ is  a d-subset of $S$. On the other hand, if $Y$ is a qd-subset
of $S$, then $a\# 0$. It follows that for any real number with $\neg
(a=0)$, $a\# 0$ which entails the Markov Principle, \textbf{MP}.

\smallskip
(v). \ The first part follows immediately from (i), (ii) and the
definition of d-subsets. The converse follows  from (i) and (iv).

\smallskip
(vi). \ Consider again $\mathbb{R}$ with the usual apartness and
define $Y=\{ 0\} .$ If $Y$ is a qd-subset of $\mathbb{R}$, then for
all $x\in \mathbb{R}$ we have  $x=0$ or $x\# 0$, hence \textbf{LPO}
holds.
\end{proof}

If the apartness is not tight, we can find subsets which are not
qd-subsets, let alone sd-subsets. To show this, let us consider the
set $S=\{ a,b,c\}$ with the apartness defined in Example \ref{ex11}
and define $Y=\{ a\}.$ Then $Y$ is not a qd-subset of $S$. If we
work with a tight apartness, although vacuously true in classical
mathematics, the properties of detachability are not automatically
satisfied in \textbf{BISH}. The Brouwerian examples from Theorem
\ref{senegcom} motivate the use of qd-subsets. Constructive
mathematics brings to the light some notions which are invisible to
the classical eye (here, the three notions of detachability).

\subsubsection{Co-quasiorders}\label{s-312}

Let $(S\times S,=,\#)$ be a set with apartness. An inhabited subset
of $S\times S$, or, equivalently, a property applicable to the
elements of $S\times S$, is called a \emph{binary relation} on $S$.
Let $\alpha$ be a relation on $S$.
 Then $$(a,b)\ \bowtie \ \alpha \  \Leftrightarrow \ \forall
_{(x,y)\in\alpha }\ ((a,b)\, \#\, (x,y)),$$ \noindent for any
$(a,b)\in S\times S$. The apartness complement of $\alpha$ is the
relation
$$\mathbf{\sim \alpha } \, =\, \{(x,y)
\in S\times S : (x,y) \bowtie \alpha\}.$$ In general, we have
$\sim\alpha \subseteq \neg \alpha$, which is shown by the following
example.
\begin{ex}\label{ex1}
Let  $S=\{a,b,c\}$ be a set with apartness defined by
$\{(a,c),(c,a),(b,c),(c,b)\}$. Let  $\alpha =\{(a,c),(c,a)\}$ be a
  relation on $S$. Its a-complement
$$\sim\!\,\alpha = \{(a,a),(b,b),(c,c),(a,b), (b,a)\}$$
is a proper subset of its logical complement $\neg\alpha$.
\end{ex}

The relation $\alpha$ defined on a set with apartness $S$ is
\begin{itemize}
\itemsep=0.95pt
\item irreflexive if $\forall_{x\in S} \,\neg((x,x)\in \alpha)$;

\item strongly irreflexive if  $(x,y)\in \alpha \ \Rightarrow\ x\#y$;

\item co-transitive if $(x,y)\in\alpha \ \Rightarrow\ \forall _{z\in S}\,((x,z)\in\alpha \,\vee\,(z,y)\in\alpha) .$
\end{itemize}

It is easy to check that a strongly irreflexive relation is also
irreflexive. For a tight apartness, the two notions of irreflexivity
are classically equivalent but not so constructively. More
precisely, if each irreflexive relation were strongly irreflexive
then \textbf{MP} would hold.

In the constructive order theory, the notion of co-transitivity,
that is the property that for every pair of related elements, any
other element is related to one of the original elements in the same
order as the original pair is a constructive counterpart to
classical transitivity, \cite{cmr1}.

\begin{lem} \label{reflexiveconsistent}
 Let $\alpha$ be a relation on $S$. Then:
 \begin{enumerate}
 \itemsep=0.95pt
 \item[\emph{(i)}] $\alpha$ is strongly irreflexive if and only if $\sim\!\,\alpha$ is reflexive;
 \item[\emph{(ii)}] if $\alpha$ is reflexive then $\sim\!\,\alpha$ is strongly irreflexive;
 \item[\emph{(iii)}] if $\alpha$ is symmetric then $\sim\!\,\alpha$ is
 symmetric;
 \item[\emph{(iv)}] if $\alpha$ is co-transitive then $\sim\!\,\alpha$
 is transitive.
\end{enumerate}
\end{lem}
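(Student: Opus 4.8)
The plan is to prove each of the four implications directly from the definitions, using the relation $\sim\!\,\alpha = \{(x,y) : (a,b)\bowtie\alpha \text{ for } (a,b)=(x,y)\}$, i.e. $(x,y)\in\sim\!\,\alpha$ iff $(x,y)\#(u,v)$ for all $(u,v)\in\alpha$, where the product apartness is $(s,t)\#(u,v) \Leftrightarrow s\#_S u \vee t\#_S v$. Throughout I would keep in mind that $\alpha$ is inhabited, which matters for part (ii).

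For (i), to show $\alpha$ strongly irreflexive implies $\sim\!\,\alpha$ reflexive: fix $x\in S$ and an arbitrary $(u,v)\in\alpha$; strong irreflexivity gives $u\#_S v$, and by cotransitivity (Ap3) applied to $u\#_S v$ with the element $x$ interposed we get $u\#_S x \vee x\#_S v$, hence $(x,x)\#(u,v)$, so $(x,x)\in\sim\!\,\alpha$. Conversely, if $\sim\!\,\alpha$ is reflexive and $(x,y)\in\alpha$, then $(x,x)\in\sim\!\,\alpha$ forces $(x,x)\#(x,y)$, which unwinds to $x\#_S x \vee x\#_S y$; the first disjunct is impossible by (Ap1), so $x\#_S y$, giving strong irreflexivity. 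For (ii), suppose $\alpha$ is reflexive and $(x,y)\in\sim\!\,\alpha$; since $(y,y)\in\alpha$, we have $(x,y)\#(y,y)$, i.e. $x\#_S y \vee y\#_S y$, and again the second disjunct dies by (Ap1), so $x\#_S y$ — that is exactly strong irreflexivity of $\sim\!\,\alpha$. Note this is where inhabitedness of $\alpha$ is not needed but reflexivity of $\alpha$ supplies the witness $(y,y)$.

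For (iii), assume $\alpha$ symmetric and let $(x,y)\in\sim\!\,\alpha$; I must show $(y,x)\in\sim\!\,\alpha$, i.e. for every $(u,v)\in\alpha$, $(y,x)\#(u,v)$. Given such $(u,v)$, symmetry of $\alpha$ gives $(v,u)\in\alpha$, so from $(x,y)\in\sim\!\,\alpha$ we get $(x,y)\#(v,u)$, which means $x\#_S v \vee y\#_S u$; applying symmetry of $\#_S$ (Ap2) to each disjunct yields $v\#_S x \vee u\#_S y$, i.e. $(u,v)\#(y,x)$, and one more (Ap2) on the product apartness (which is symmetric, as it is an apartness) gives $(y,x)\#(u,v)$. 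For (iv), assume $\alpha$ cotransitive and let $(x,y),(y,z)\in\sim\!\,\alpha$; I must show $(x,z)\in\sim\!\,\alpha$. Fix $(u,v)\in\alpha$. Cotransitivity of $\alpha$ applied to $(u,v)\in\alpha$ with the interposed element $y$ gives $(u,y)\in\alpha \vee (y,v)\in\alpha$. In the first case, $(y,z)\in\sim\!\,\alpha$ gives $(y,z)\#(u,y)$, i.e. $y\#_S u \vee z\#_S y$; in the second case, $(x,y)\in\sim\!\,\alpha$ gives $(x,y)\#(y,v)$, i.e. $x\#_S y \vee y\#_S v$. This does not immediately produce $(x,z)\#(u,v)$, so the argument needs a further round of cotransitivity of $\#_S$ to shuffle the apartness onto the coordinates $x,z$ versus $u,v$.

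The main obstacle is precisely the bookkeeping in (iv): after the first split one has an apartness statement involving $y$ in one slot, and one must convert, say, $y\#_S u$ into something about $x$ or $z$ versus $u$. The clean way is: in the case $(u,y)\in\alpha$, I use $(x,y)\in\sim\!\,\alpha$ as well — it gives $(x,y)\#(u,y)$, i.e. $x\#_S u \vee y\#_S y$, so by (Ap1) actually $x\#_S u$, whence $(x,z)\#(u,v)$ directly; symmetrically, in the case $(y,v)\in\alpha$, I use $(y,z)\in\sim\!\,\alpha$ to get $(y,z)\#(y,v)$, i.e. $y\#_S y \vee z\#_S v$, so $z\#_S v$ by (Ap1), whence $(x,z)\#(u,v)$. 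Thus in both branches we land on $(x,z)\#(u,v)$, and since $(u,v)\in\alpha$ was arbitrary, $(x,z)\in\sim\!\,\alpha$, establishing transitivity of $\sim\!\,\alpha$. The lesson is that one should feed the interposed $\alpha$-pair through both hypotheses rather than trying to push apartness across coordinates by hand; once set up this way every step is a one-line appeal to (Ap1), (Ap2), (Ap3) and the definition of the product apartness.
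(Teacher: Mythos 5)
Your proofs of (i)--(iii) are correct and essentially the paper's own: the same witnesses, the same one-line appeals to (Ap1), (Ap2) and (Ap3). Part (iv) is where you genuinely diverge, and your route is arguably the better one. The paper interposes all three of $x,y,z$ into a fixed pair $(a,b)\in\alpha$, obtaining the four-fold disjunction $(a,x)\in\alpha\vee(x,y)\in\alpha\vee(y,z)\in\alpha\vee(z,b)\in\alpha$, discards the two middle disjuncts because $(x,y)\bowtie\alpha$ and $(y,z)\bowtie\alpha$, and then passes from $(a,x)\in\alpha\vee(z,b)\in\alpha$ to $a\#x\vee b\#z$ --- a step that, read literally, leans on strong irreflexivity of $\alpha$, which is not among the hypotheses of (iv) (it holds in the intended applications, where $\alpha$ is a co-quasiorder, but not in the statement as given). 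You instead interpose only $y$, getting $(u,y)\in\alpha\vee(y,v)\in\alpha$, and feed each branch into the appropriate hypothesis: $(x,y)\#(u,y)$ yields $x\#u\vee y\#y$, hence $x\#u$ by (Ap1) and disjunctive syllogism (which is intuitionistically valid), and symmetrically $z\#v$ in the other branch; either way $(x,z)\#(u,v)$. This is shorter, uses co-transitivity of $\alpha$ only once, and proves (iv) exactly under its stated hypothesis. The only blemish is presentational: your middle paragraph first heads down a dead end (``needs a further round of cotransitivity of $\#$'') before the final paragraph supplies the correct argument; in a clean write-up you would delete the false start and keep only the two-branch case analysis.
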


\begin{proof} (i). \ Let $\alpha$ be a strongly irreflexive  relation on $S$.
For each $a\in S$, it can be easily proved that $(a,a)\# (x,y)$ for
all $(x,y)\in \alpha$.

Let $\sim\!\,\alpha$ be reflexive, that is $(x,x)\in\sim\!\,\alpha$,
for any $x\in S$. On the other hand, the definition of the
a-complement implies $(x,y)\# (x,x)$ for any $(x,y)\in \alpha$. So,
$x\# x$ or $x\# y$. Thus, $x\# y$, that is, $\alpha$ is strongly
irreflexive.

\smallskip
(ii). \  Let $\alpha$ be reflexive. Let $(x,y)$ be an element of
$\sim\alpha$. Since $\alpha$ is reflexive, $(y,y)\in\alpha$ hence
$(x,y)\# (y,y)$ which implies $x\# y.$ Consequently, $\sim\alpha$ is
strongly irreflexive.

\smallskip
(iii). \ If $\alpha$ is symmetric, then
\begin{eqnarray*}
(x,y)\in\sim\!\,\alpha \ &\Leftrightarrow & \ \forall _{(a,b)\in \alpha} \, ((x,y)\#(a,b))\\
&\Rightarrow  & \ \forall _{(b,a)\in \alpha} \, ((x,y)\#(b,a))\\
 &\Rightarrow & \ \forall _{(b,a)\in \alpha} \, (x\# b \ \vee \ y\# a)\\
 &\Rightarrow & \ \forall _{(a,b)\in \alpha} \, ((y,x)\# (a,b))\\
&\Leftrightarrow & \ (y,x)\in\sim\!\,\alpha.
\end{eqnarray*}

\smallskip
(iv). \ If $(x,y)\in\sim\alpha$ and $(y,z)\in\sim\alpha$, then, by
the definition of $\sim\alpha$, we have that $(x,y)\bowtie \alpha$
and $(y,z)\bowtie \alpha$. For an element $(a,b)\in\alpha$, by
co-transitivity of $\alpha$, we have $(a,x)\in\alpha$ or
$(x,y)\in\alpha$ or $(y,z)\in\alpha$ or $(z,b)\in\alpha$. Thus
$(a,x)\in\alpha$ or $(z,b)\in\alpha$, which implies that $a\# x$ or
$b\# z$, that is $(x,z)\#(a,b)$. So, $(x,z)\bowtie \alpha$ and
$(x,z)\in\ \sim\alpha$. Therefore, $\sim\alpha$ is transitive.
\end{proof}

\begin{rem}\label{dbv-b}
As it is shown in Lemma~\ref{reflexiveconsistent}, it can be proved
that the logical complement of each co-transitive relation is
transitive. However, if the logical complement of any transitive
relation were co-transitive, then \textbf{LLPO} would hold.

\medskip
To prove this, let us consider the strict order $<$ on
the real number line and assume that its logical complement $\geq$
is cotransitive. Then, taking into account that $x\geq x$, it
follows that
$$\forall x\in {\bf R}\; (x\geq 0\vee 0\geq x)$$ which,
in turn, is equivalent to {\bf LLPO}. This Brouwerian example can be
found in books on constructive mathematics, see, for example,
\cite{dbv2}.
\end{rem}

The apartness complement $\sim\alpha$ of a relation $\alpha$ of $S$
can be transitive without assuming co-transitivity of $\alpha$. So,
the converse statement from Lemma~\ref{reflexiveconsistent}(iv), in
general, is not true.

\begin{ex} \label{cex-1}
Let $(S, = , \# )$ be a set with apartness defined in
Example~\ref{ex1}.

\smallskip
(1.) A strongly  irreflexive (symmetric) relation $\alpha
=\{(a,c),(c,a)\}$, which is not co-transitive has the a-complement
$$\sim \alpha = \{(a,a),(b,b),(c,c),(a,b), (b,a)\}$$
which is transitive.

\smallskip
(2.)  A strongly irreflexive (nonsymmetric)  relation $\alpha
=\{(a,c),(c,a), (b,c)\}$, which is not co-transitive, has the
a-complement
$$\sim\alpha = \{(a,a),(b,b),(c,c),(a,b), (b,a)\}$$
which is transitive.
\end{ex}

\begin{rem}
The Brouwerian example from Remark~\ref{dbv-b} also shows that, even
for a tight apartness, a relation whose co-transitivity cannot be
proved constructively might have a transitive a-complement. To prove
this, we need only observe that the a-complement of the relation
$\geq$ is $<$.
\end{rem}

A relation $\tau$ defined on a set with apartness $S$ is a
\begin{itemize}
\item \emph{weak co-quasiorder} if it is irreflexive and co-transitive,
\item \emph{co-quasiorder } if it is strongly irreflexive and co-transitive.
\end{itemize}

\begin{rem}\label{r2-bhm}
``One  might expect that the splitting of notions leads to an
enormous proliferation of results in the various parts of
constructive mathematics when compared with their classical
counterparts. In particular, usually only very few constructive
versions of a classical notion are worth developing since other
variants do not lead to a mathematically satisfactory theory,''
\cite{t1}.

Even if the two classically   (but not constructively) equivalent
variants of a co-quasiorder are constructive counterparts of a
quasiorder in the case of (a tight) apartness, the stronger variant,
co-quasiorder, is, of course, the most appropriate for a
constructive development of the theory of semigroups with apartness
we develop, which will be evident in the continuation of this paper.
The weaker variant, that is, weak co-quasiorder, could be relevant
in analysis.
\end{rem}

As in Example~\ref{ex1} the a-complement of a relation can be a
proper subset of its logical complement. If the relation in question
is a co-quasiorder, then we have the following important properties.

\begin{prop} \label{senegcompl}
 Let  $\tau$ be a co-quasiorder on $S$. Then:
\begin{itemize}
\item[\emph{(i)}]  $\tau$ is a qd-subset of $S\times S$;
\item[\emph{(ii)}] $\sim\!\,\tau = \neg\,\tau$.
\end{itemize}
\end{prop}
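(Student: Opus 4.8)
The plan is to prove both parts by exploiting co-transitivity of $\tau$ to force the crucial dichotomy, and then invoking Theorem~\ref{senegcom}(ii) to upgrade part (i) into part (ii).

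For part (i), I would start from the definition: $\tau$ is a qd-subset of $S \times S$ precisely when $\forall_{(a,b) \in S\times S}\, \forall_{(x,y) \in \tau}\, \big((a,b) \in \tau \,\vee\, (a,b)\,\#\,(x,y)\big)$. So fix an arbitrary pair $(a,b) \in S\times S$ and an arbitrary element $(x,y) \in \tau$. Since $(x,y) \in \tau$ and $\tau$ is co-transitive, I can insert the element $b$ in the first coordinate and the element $a$ in the second, or more directly: apply co-transitivity to $(x,y) \in \tau$ with the ``midpoint'' chosen appropriately. Concretely, co-transitivity gives, for $(x,y) \in \tau$, that $(x,a) \in \tau$ or $(a,y) \in \tau$; applying it again to $(a,y)\in\tau$ with midpoint $b$ yields $(a,b)\in\tau$ or $(b,y)\in\tau$. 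Now I need to convert the ``outer'' alternatives $(x,a)\in\tau$ and $(b,y)\in\tau$ into apartness information about the pair $(a,b)$ versus $(x,y)$. Here strong irreflexivity of $\tau$ is the key: $(x,a)\in\tau$ implies $x\#a$, hence $(a,b)\#(x,y)$ (since apartness of pairs only requires one coordinate to differ); similarly $(b,y)\in\tau$ implies $b\#y$, hence again $(a,b)\#(x,y)$. Collecting the three cases — $(a,b)\in\tau$, or $x\#a$, or $b\#y$ — we obtain $(a,b)\in\tau$ or $(a,b)\#(x,y)$, which is exactly what the qd-subset property demands. (I should double-check the order of coordinates in the two co-transitivity applications so that the apartness conclusions land correctly; this is the one place where a sign-type slip is easy.)

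For part (ii), the inclusion $\sim\tau \subseteq \neg\tau$ is general and already noted in the excerpt (it holds for any relation, by irreflexivity/consistency of $\#$ against membership). For the reverse inclusion $\neg\tau \subseteq \sim\tau$, I would simply apply Theorem~\ref{senegcom}(ii): that result states that any qd-subset $Y$ of a set with apartness satisfies $\sim Y = \neg Y$, and by part (i) just proved, $\tau$ is a qd-subset of the set with apartness $S\times S$. Hence $\sim\tau = \neg\tau$ directly. Alternatively, one can give the direct argument: if $(a,b)\in\neg\tau$, then for every $(x,y)\in\tau$, part (i) gives $(a,b)\in\tau$ or $(a,b)\#(x,y)$; the first disjunct contradicts $(a,b)\in\neg\tau$, so $(a,b)\#(x,y)$ for all $(x,y)\in\tau$, i.e.\ $(a,b)\bowtie\tau$, i.e.\ $(a,b)\in\sim\tau$.

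The main obstacle is not conceptual but bookkeeping: making sure the repeated use of co-transitivity is applied with the correct placement of the auxiliary element so that the ``surviving'' cases are precisely those whose strong-irreflexivity consequence ($x\#a$ or $b\#y$) yields apartness of the pairs $(a,b)$ and $(x,y)$ in the sense of the product apartness $(s,t)\#(u,v) \Leftrightarrow s\#u \vee t\#v$. Once the combinatorics of the dichotomy are lined up, both parts are short; part (ii) is essentially a corollary of part (i) together with Theorem~\ref{senegcom}(ii).
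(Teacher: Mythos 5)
Your proof is correct and follows essentially the same route as the paper: two applications of co-transitivity (with the two coordinates of the arbitrary pair as midpoints) plus strong irreflexivity to turn the outer disjuncts into apartness of pairs, and then part (ii) via Theorem~\ref{senegcom}(ii). The only difference is cosmetic (your roles of $(a,b)$ and $(x,y)$ are swapped relative to the paper), and your bookkeeping of the coordinates is in order.
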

\begin{proof}
(i). \ Let $(x,y)\in S\times S$. Then, for all $(a,b)\in\tau$,
\begin{eqnarray*}
    a\tau x \vee x\tau b  &\ \Rightarrow\ & a\tau x \vee x\tau y \vee y\tau b\\
  & \ \Rightarrow\ & a\# x \vee x\tau y\vee y\# b \\
  & \ \Rightarrow\ & (a,b)\# (x,y) \vee x\tau y,
\end{eqnarray*}
that is, $\tau$ is a qd-subset.

\vspace{2mm}
(ii). \ It follows from (i) and Theorem~\ref{senegcom}(ii).
\end{proof}
See also \cite{mmscr}.

 The co-quasiorder is one of the main building
blocks for the order theory of semigroups with apartness we develop.

\smallskip
In general, to describe the relation we have to determine which
ordered pairs belong to $\tau $, that is, we have to determine
$a\tau$ and $\tau a$, the left and the right $\tau$-class of each
element $a$ from $S$. That is the way to connect (in \textbf{CLASS}
and in \textbf{BISH} as well) a relation defined on a given set with
certain subsets of the set. Starting from an sd-subset $T$ of $S$,
we are able to construct co-quasiorders as follows.

\begin{lem} \label{lem3.1}
Let $T$ be an sd-subset of a set with apartness $S$. Then, the
relation $\tau$ on $S$, defined by
$$
(a,b)\in \tau \ \stackrel{\rm def}{\Leftrightarrow} \ a \in \sim
T\,\wedge \, b \in T,
$$
is a  co-quasiorder on $S$.
\end{lem}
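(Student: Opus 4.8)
The plan is to verify directly that the relation $\tau$ defined by $(a,b)\in\tau \Leftrightarrow a\in{\sim}T \wedge b\in T$ satisfies the two defining conditions of a co-quasiorder: strong irreflexivity and co-transitivity. For strong irreflexivity, suppose $(x,x)\in\tau$. Then $x\in{\sim}T$ and $x\in T$ simultaneously. From $x\in{\sim}T$ we get $x\bowtie T$, i.e. $x\# y$ for all $y\in T$; applying this with $y=x\in T$ yields $x\# x$, contradicting (Ap1). Hence no such pair exists — but more to the point, the strong irreflexivity we actually need is the implication $(x,y)\in\tau \Rightarrow x\# y$. This follows the same way: if $x\in{\sim}T$ and $y\in T$, then $x\bowtie T$ gives $x\# y$ immediately.

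The substantive step is co-transitivity: given $(x,y)\in\tau$ and an arbitrary $z\in S$, I must produce $(x,z)\in\tau$ or $(z,y)\in\tau$. Here is where the hypothesis that $T$ is an \emph{sd}-subset enters — this is the one place the full strength of the assumption is used, and I expect it to be the main (really the only) obstacle. Since $T$ is an sd-subset, $\forall_{w\in S}\,(w\in T \vee w\in{\sim}T)$; apply this to the element $z$. In the case $z\in{\sim}T$: we already have $y\in T$ (from $(x,y)\in\tau$), so $z\in{\sim}T\wedge y\in T$ gives $(z,y)\in\tau$. In the case $z\in T$: we already have $x\in{\sim}T$ (from $(x,y)\in\tau$), so $x\in{\sim}T\wedge z\in T$ gives $(x,z)\in\tau$. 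Either way we land in one of the two disjuncts, so $\tau$ is co-transitive.

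Combining the two parts, $\tau$ is strongly irreflexive and co-transitive, hence a co-quasiorder on $S$, as claimed. One should also note $\tau$ is inhabited as a relation (so that it qualifies as a binary relation in the sense of the paper) provided $T$ and ${\sim}T$ are both inhabited; if the paper wants $\tau$ to be a genuine binary relation one may want to assume $T$ inhabited and ${\sim}T$ inhabited, or simply remark that the verification of the two axioms is what matters. The only real content is the case split on $z$ via strong detachability; everything else is an unwinding of the definitions of ${\sim}T$, $\bowtie$, and the apartness axioms.
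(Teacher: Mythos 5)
Your proof is correct and is essentially the paper's own argument: strong irreflexivity is immediate from $a\in{\sim}T$ and $b\in T$, and co-transitivity follows from the case split $z\in T$ or $z\in{\sim}T$ supplied by strong detachability. The aside about inhabitedness is harmless but not needed for the verification.
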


\begin{proof}
 Let  $(a,b)\in \tau$,  that is $a \in \sim T$ and $b\in T$,
and  let $x\in S$. By the assumption, $T$ is an sd-subset, so we
have $x\in T$ or $x\in \sim T$. If $x\in T$, then, by the definition
of $\tau$, we have $(a,x)\in  \tau$. Similarly, if $x\in \sim T$,
then $(x,b)\in \tau$. Thus, co-transitivity of $\tau$ is proved. By
the definition of $\tau$, the strong irreflexivity follows
immediately. Thus, $\tau$ is a co-quasiorder on $S$.
\end{proof}

\begin{ex}\label{bhm-e3}
Let $S = \{a,b,c,d,e\}$ be a  set with the diagonal
$$\triangle_S = \{ (a,a), (b,b), (c,c), (d,d) , (e,e)\}$$
 as the equality relation. If we denote  by
 $K$ the set $\triangle_S \cup \{(a,b),(b,a)\}$, then we can define an apartness $\#\,$ on $S$
 to be  $(S\times S) \setminus  K$. Thus, $(S, = , \#\,)$ is a set with apartness. The
 relation $\tau \subseteq S \times S$, defined by
$$\tau  = \{(c,a),(c,b),(d,a),(d,b),(d,c),(e,a),(e,b),(e,c), (e,d)\},$$
 is a co-quasiorder  on $S$. (Left) $\tau$-classes of $S$ are:
$a\tau=b\tau = \emptyset$, $c\tau = \{a,b\}$, $d\tau = \{a,b,c\}$,
$e\tau = \{a,b,c,d\}$. It can be easily checked that all those
$\tau$-classes are sd-subsets of $S$.
\end{ex}

Generally speaking, for  a co-quasiorder defined on a set with
apartness we can not prove that its left and/or right classes are
d-subsets or sd-subsets. More precisely, we can prove the following
result.

\begin{prop}\label{p11-bhm}
Let $\tau$ be a co-quasiorder. Then:
\begin{itemize}
\item[\emph{(i)}] if $a\tau $ is a d-subset of $S$ for any $a\in S$,
then \rm{\textbf{LPO}} holds;
\item[\emph{(ii)}] if $a\tau $ is an sd-subset of $S$ for any $a\in S$,
then \rm{\textbf{LPO}} holds.
\end{itemize}
\end{prop}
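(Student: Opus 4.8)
The plan is to give a Brouwerian counterexample: I would exhibit one set with apartness carrying a co-quasiorder whose left classes, were they all detachable (resp.\ strongly detachable), would decide equality on $\mathbb{R}$. The natural carrier is $\mathbb{R}$ with its usual tight apartness $\#$, and the natural co-quasiorder is $\#$ itself. The first step is the routine check that $\#$ is a co-quasiorder on $\mathbb{R}$: strong irreflexivity is immediate (if $x\#y$ then $x\#y$), co-transitivity is axiom (Ap3), and the relation is inhabited since $0\#1$. For $\tau=\#$ the left class of $a\in\mathbb{R}$ is $a\tau=\widetilde a=\{x\in\mathbb{R}:a\#x\}$, so in particular $0\tau=\widetilde 0$.

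For (i) I would argue: if every $a\tau$ is a d-subset of $\mathbb{R}$, then so is $\widetilde 0$, i.e.\ $\forall_{x\in\mathbb{R}}\,(x\in\widetilde 0\vee x\in\neg\widetilde 0)$, that is $\forall_{x\in\mathbb{R}}\,(x\#0\vee\neg(x\#0))$. Tightness of $\#$ rewrites $\neg(x\#0)$ as $x=0$, giving $\forall_{x\in\mathbb{R}}\,(x\#0\vee x=0)$, which is the decidability of equality on $\mathbb{R}$, i.e.\ \textbf{LPO} by Remark~\ref{lpo-bhm}. For (ii) the quickest route is to quote Theorem~\ref{senegcom}(v): every sd-subset is a d-subset, so the hypothesis of (ii) implies that of (i). Alternatively one can argue directly: exactly as in the proof of Theorem~\ref{senegcom}(i), co-transitivity of $\#$ makes $\widetilde 0$ a qd-subset of $\mathbb{R}$, whence by Theorem~\ref{senegcom}(ii) $\sim\widetilde 0=\neg\widetilde 0=\{0\}$ (tightness); then $\widetilde 0$ being an sd-subset again forces $\forall_{x\in\mathbb{R}}\,(x\#0\vee x=0)$, i.e.\ \textbf{LPO}.

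I do not expect any step to be a serious obstacle; the only delicate point is the choice of witness. A finite co-quasiorder such as the one in Example~\ref{bhm-e3} would be useless for the purpose, since there every (left) class is already an sd-subset --- detachability of classes is cheap precisely because the ambient apartness is essentially decidable. Over $\mathbb{R}$ one has to select a class, here $\widetilde 0$, whose detachability literally amounts to deciding whether an arbitrary real is equal to $0$. This is also the conceptual point of the proposition: for co-quasiorders on genuinely constructive sets one cannot hope for left/right classes to be more than qd-subsets, in harmony with Proposition~\ref{senegcompl}(i).
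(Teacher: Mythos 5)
Your proposal is correct and is essentially the paper's own proof: the paper likewise takes $\tau$ to be the usual apartness on $\mathbb{R}$ with $a=0$, derives $\forall_{x\in\mathbb{R}}(x\#0\vee x=0)$ from detachability of $\widetilde 0$ (invoking the argument of Theorem~\ref{senegcom}(iii) for (i) and Theorem~\ref{senegcom}(i) for (ii)), and concludes \textbf{LPO} via Remark~\ref{lpo-bhm}. Your extra checks (that $\#$ is an inhabited co-quasiorder, and the remark on why finite examples cannot work) are sound but not needed beyond what the paper records.
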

\begin{proof}
(i). \ Similar to the proof of Theorem~\ref{senegcom}(iii). It
suffices to let $\tau$ be the usual apartness on the real number set
and $a=0.$

\vspace{1.8mm}
(ii). \ We can use the same example as above and apply
Theorem~\ref{senegcom}(i).
\end{proof}

Having in mind what is  just proved, we cannot expect to prove Lemma
3.2 from \cite{cmr2}, as stated in \cite{cmr2}, with d-subsets or
sd-subsets without the Constant Domain Axiom, \textbf{CDA}.

\subsubsection{Intuitionistic logic of constant domains CD as a background}

Following \cite{fa},  the intuitionistic logic of constant domains
\textbf{CD } arises from a very natural Kripke-style semantics,
which was proposed in \cite{ag}  as a philosophically plausible
interpretation of intuitionistic logic.  \textbf{\textbf{CD}}  can
be formalized as intuitionistic logic extended with from the
classical algebra point of view pretty strong principle, the
Constant Domain Axiom, CDA,
$$
\Vdash \forall _x\, (P\vee R(x))\, \rightarrow \  (P\vee \forall
_x\,R(x)),
$$
where $x$ is not a free variable of $P$. The intermediate logic
 obtained in this way, as it is pointed out in \cite{fa}, further
 proves intuitionistically as well as classically valid theorems, yet they
often possess a strong constructive flavour.

\medskip
From a given co-quasiorder $\tau$, with \textbf{CD} as a logical
background, we are able to prove the connection of its classes  with
sd-subsets of $S$.

\begin{lem} \label{lem3.2}
Let $\tau$ be a co-quasiorder on a set $S$. Then $a\tau$
(respectively $\tau a$) is an sd-subset of $S$, such that $a\bowtie
a\tau$ (respectively $a\bowtie\tau a$), for any $a\in S$. Moreover,
if $(a,b)\in \tau$, then $a\tau \cup \tau b = S$ is true for all
$a,b\in S$.
\end{lem}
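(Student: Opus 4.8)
The plan is to verify the three assertions of Lemma~\ref{lem3.2} in turn, using the logic of constant domains \textbf{CD} precisely at the point where Proposition~\ref{p11-bhm} shows it is unavoidable. First I would fix $a\in S$ and prove that $a\bowtie a\tau$, i.e.\ that $a\#y$ for every $y\in a\tau$: if $y\in a\tau$ then $(a,y)\in\tau$, and since $\tau$ is a co-quasiorder it is strongly irreflexive, so $(a,y)\in\tau$ gives $a\#y$ immediately. The argument for $\tau a$ is symmetric. In particular $a\in{\sim}(a\tau)$ and $a\in{\sim}(\tau a)$.

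Next I would establish that $a\tau$ is an sd-subset, which is the heart of the lemma. I need $\forall_{x\in S}\,(x\in a\tau \,\vee\, x\in{\sim}(a\tau))$. The natural move is co-transitivity: for each $x$ I want to say ``either $(a,x)\in\tau$, hence $x\in a\tau$, or $x$ is apart from everything in $a\tau$.'' But co-transitivity applied to a witness $(a,b)\in\tau$ only gives, for a fixed $b\in a\tau$, that $(a,x)\in\tau$ or $(x,b)\in\tau$; the latter yields $x\#b$ by strong irreflexivity. To turn the pointwise disjunction $\forall_{b\in a\tau}(x\in a\tau \,\vee\, x\#b)$ into $x\in a\tau \,\vee\, \forall_{b\in a\tau}(x\#b)$ — that is, $x\in a\tau \,\vee\, x\bowtie a\tau$ — I would invoke the Constant Domain Axiom \textbf{CDA} with $P$ the proposition ``$x\in a\tau$'' (which has no free $b$) and $R(b)$ the proposition ``$x\#b$'', quantifying over $b\in a\tau$. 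This is exactly the step that cannot be carried out in plain \textbf{BISH}, as Proposition~\ref{p11-bhm} makes precise. (One small subtlety: \textbf{CDA} is usually stated for a quantifier over an inhabited domain; if $a\tau$ is empty the conclusion $x\in{\sim}(a\tau)$ holds vacuously, so one handles that case directly, and otherwise a witness $(a,b)\in\tau$ supplies inhabitedness.) The claim for $\tau a$ follows by the mirror argument.

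Finally, for the ``moreover'' part, suppose $(a,b)\in\tau$ and let $x\in S$ be arbitrary; I must show $x\in a\tau \,\vee\, x\in\tau b$, so that $a\tau\cup\tau b=S$. This is a direct application of co-transitivity of $\tau$ to the pair $(a,b)$: it yields $(a,x)\in\tau$ or $(x,b)\in\tau$, i.e.\ $x\in a\tau$ or $x\in\tau b$. Since $x$ was arbitrary, $S\subseteq a\tau\cup\tau b$, and the reverse inclusion is trivial.

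I expect the main obstacle to be the sd-subset claim, and specifically making the passage from the \emph{universally quantified disjunction} ``for all $b\in a\tau$, $x\in a\tau$ or $x\#b$'' to the \emph{disjunction of a statement with a universal} ``$x\in a\tau$, or for all $b\in a\tau$, $x\#b$''. Everything else is routine manipulation of strong irreflexivity and co-transitivity; it is only here that the logical strength of \textbf{CD} over \textbf{BISH} is genuinely used, which is why the lemma is stated in the \textbf{CD} background and why its \textbf{BISH}-provable weakening must be the separate Theorem~\ref{thm4.11}.
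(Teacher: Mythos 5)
Your proof is correct and follows exactly the route the paper intends: the qd-property of $a\tau$ falls out of co-transitivity plus strong irreflexivity in plain \textbf{BISH}, the upgrade to an sd-subset is precisely the (relativized) Constant Domain Axiom --- the step the paper flags via Proposition~\ref{p11-bhm} and the remark that sd- and qd-subsets coincide in \textbf{CD} --- and the ``moreover'' clause is immediate from co-transitivity applied to $(a,b)$. The paper itself only cites \cite{cmr2} for the proof, but your argument is the one its surrounding apparatus is built to support.
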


\begin{proof}
See \cite{cmr2}.
\end{proof}

Lemma~\ref{lem3.2}, due to Proposition~\ref{p11-bhm}, cannot be
proved outside \textbf{CD} as logical background.

\begin{rem}
In intuitionistic logic of constant domain \textbf{CD}, the notions
of sd-subset and qd-subset coincide.
\end{rem}

\subsubsection{QSP  for sets with apartness}\label{s-313}

The  Quotient Structure Problem, \textbf{QSP},  is one of the very
first problems which has to be considered for any structure with
apartness. The solutions of the \textbf{QSP} problem for sets and
semigroups with apartness was given in \cite{cmr1}. Those results
are improved in \cite{mmscr}. In what follows,  we achieve a little
progress in that direction. Theorem~\ref{BHM-1}, the key theorem for
the \textbf{QSP}'s solution generalizes the similar ones from
\cite{cmr1}, \cite{mmscr}. In addition, as a generalization of the
Theorem~\ref{cmrth2}, the first apartness isomorphism theorem, the
new Theorem~\ref{basicfactor}, that we call the second apartness
isomorphism theorem for sets with apartness is formulated and
proved.

\medskip
The quotient structures are not part of \textbf{BISH}. A quotient
structure does not have, in general, a natural apartness relation.
For most purposes, we overcome this problem using a
\textbf{\emph{co-equivalence}}--symmetric co-quasiorder--instead of
an equivalence. Existing properties of a co-equivalence guarantee
its a-complement  is an equivalence as well as the quotient set of
that equivalence will inherit an apartness. The following notion
will be necessary. For any two relations  $\alpha$ and
$\beta$ on $S$ we can   say that $\mathbf{\alpha}$ \emph{defines an
apartness on} ${S/\beta}$  if we have\smallskip

(Ap6) \ \  $x\beta\,\# \,y\beta \ \stackrel{\rm
def}{\Leftrightarrow}\ (x,y)\in\alpha$.

If in addition $\alpha$ is a co-quasiorder and $\beta$
is an equivalence, then (Ap6) implies

(Ap6') \ \ $((x,a)\in\beta\,\wedge\,(y,b)\in\beta) \ \Rightarrow \ (
(x,y)\in\alpha \,\Leftrightarrow\, (a,b)\in\alpha).$

\vspace{1.8mm}
Indeed, let $\alpha$ be a co-quasiorder and $\beta$ an equivalence
on $S$ such that  $\mathbf{\alpha}$ \emph{defines an apartness on}
${S/\beta}$. Let $(x,a), (y,b) \in\beta$, i.e. $a\in x\beta$
and $b\in y\beta$, which, by the assumption, gives $a\beta = x\beta$
and $b\beta = y\beta$. If $(x,y)\in \alpha$, then, by (Ap6), $x\beta
\,\#\, y\beta$, which, by (Ap5'), gives $a\beta \,\#\, b\beta$. By
(Ap6) we have $(a,b)\in\alpha$. In a similar manner, starting from
$(a,b)\in\alpha$ we can conclude $(x,y)\in \alpha$.

 The next  theorem is the key for the solution of
\textbf{\textbf{QSP}} for sets with apartness. It generalizes the
results from \cite{cmr1}, \cite{mmscr}.

\begin{teo}\label{BHM-1}
Let $S$ be a set with apartness. Then:
\begin{itemize}
\itemsep=0.95pt
\item[\emph{(i)}] Let $\varepsilon$ be an equivalence, and $\kappa$ a co-equivalence on $S$. Then, $\kappa$
  defines an apartness  on the factor set $S/\varepsilon$ if and only if $\varepsilon\cap\kappa =
  \emptyset$.
\item[\emph{(ii)}]  The quotient mapping $\pi : S \rightarrow S/\varepsilon$, defined by
$\pi (x)  = x\varepsilon$, is an onto se-mapping.
\end{itemize}
\end{teo}
\begin{proof}
(i). \ \ Let $x,y\in S$ and assume that $(x,y)\in
\varepsilon\cap\kappa$. Then $(x,y)\in \varepsilon$ and $(y,y)\in
\varepsilon$, which, by  (Ap6') and  and $(x,y)\in\kappa$, gives
$(y,y)\in \kappa$, which is impossible. Thus, $\varepsilon\cap\kappa
= \emptyset$.

\medskip
Let $(x,a), (y,b)\in \varepsilon$ and $(x,y)\in\kappa$. Then, by
co-transitivity of $\kappa$ and by assumption, we have
\begin{eqnarray*}
(x,y)\in \kappa &\ \Rightarrow\ & (x,a)\in \kappa \vee (a,y)\in
\kappa\\
 &\ \Rightarrow\ & (x,a)\in \kappa \vee (a,b)\in \kappa \vee (b,y)\in\kappa\\
 &\ \Rightarrow\ &  (a,b)\in \kappa.
\end{eqnarray*}

(ii). \ Let $\pi(x)\#\pi(y)$, that is $x\varepsilon \#
y\varepsilon$, which, by (i), means that $(x,y)\in \kappa$. Then, by
the strong irreflexivity of $\kappa$, we have $x\# y$. So $\pi$ is
an se-mapping.

\medskip
Let $a\varepsilon\in S/\varepsilon$ and $x\in a\varepsilon$. Then
$(a,x)\in\varepsilon$, i.e. $a\varepsilon = x\varepsilon$, which
implies that $a\varepsilon = x\varepsilon=\pi(x)$. Thus $\pi$ is an
onto mapping.
\end{proof}

\begin{cor}\label{mhb-c32}
If $\kappa$ is a co-equivalence on $S$, then the relation
${\sim}\kappa(={\neg} \kappa)$ is an equivalence on $S$, and
$\kappa$ defines an apartness on  $S/\,\sim\kappa$.
\end{cor}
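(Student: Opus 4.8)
The plan is to derive Corollary~\ref{mhb-c32} directly from Theorem~\ref{BHM-1} by choosing $\varepsilon = {\sim}\kappa$. First I would recall that, since $\kappa$ is a co-equivalence (a symmetric co-quasiorder), Lemma~\ref{reflexiveconsistent} applies: by parts (iii) and (iv) the a-complement $\sim\kappa$ is symmetric and transitive, and by part (i) (using strong irreflexivity of $\kappa$) $\sim\kappa$ is reflexive. Hence $\sim\kappa$ is an equivalence on $S$. Moreover, since $\kappa$ is a co-quasiorder, Proposition~\ref{senegcompl}(ii) gives $\sim\kappa = \neg\kappa$, which justifies the parenthetical identification in the statement.

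Next I would verify the hypothesis of Theorem~\ref{BHM-1}(i) for the pair $(\varepsilon,\kappa) = ({\sim}\kappa, \kappa)$, namely that $\varepsilon \cap \kappa = \emptyset$. This is immediate: if $(x,y)\in {\sim}\kappa \cap \kappa$, then $(x,y)\bowtie\kappa$ by definition of the a-complement, so in particular $(x,y)\#(x,y)$, contradicting irreflexivity (Ap1) of the apartness on $S\times S$; equivalently, $(x,y)\in\neg\kappa$ and $(x,y)\in\kappa$ is a direct contradiction. With $\varepsilon\cap\kappa=\emptyset$ established, Theorem~\ref{BHM-1}(i) yields that $\kappa$ defines an apartness on $S/{\sim}\kappa$, which is exactly the second assertion of the corollary.

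There is essentially no hard step here — the corollary is a specialization of the theorem, so the only thing to check carefully is that $\sim\kappa$ genuinely qualifies as an equivalence (so that it is a legitimate choice for $\varepsilon$), which is handled entirely by Lemma~\ref{reflexiveconsistent}. If one wanted to be fully explicit, one could also note that $S/{\sim}\kappa$ is inhabited because $S$ is, so the quotient set and the quotient map of Theorem~\ref{BHM-1}(ii) are available as well. I would write the proof in three short lines: cite Lemma~\ref{reflexiveconsistent} for ``$\sim\kappa$ is an equivalence'', cite Proposition~\ref{senegcompl}(ii) for ``$\sim\kappa=\neg\kappa$'', and cite Theorem~\ref{BHM-1}(i) with the trivial verification $\varepsilon\cap\kappa=\emptyset$ for the apartness claim.
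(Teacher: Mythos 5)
Your proposal is correct and follows exactly the paper's own route: the paper's proof of Corollary~\ref{mhb-c32} likewise cites Lemma~\ref{reflexiveconsistent} for ``$\sim\kappa$ is an equivalence'', Proposition~\ref{senegcompl} for ``$\sim\kappa=\neg\kappa$'', and Theorem~\ref{BHM-1} for the apartness on $S/\!\sim\kappa$. Your extra verification that $\sim\kappa\cap\kappa=\emptyset$ is a worthwhile explicit check that the paper leaves implicit, but it does not change the argument.
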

\begin{proof}
By Lemma~\ref{reflexiveconsistent}, $\sim\kappa$ is an equivalence,
by  Proposition~\ref{senegcompl}, $(\sim\kappa) = (\neg \kappa)$,
and, by Theorem~\ref{BHM-1}, $\kappa$ defines an apartness on
$S/\sim\kappa$.
\end{proof}

Let $f : S\to T$ be an se-mapping between sets with apartness. Then
the relation%
\[
\mathrm{coker}\,f \stackrel{\rm def}{=} \{(x,y)\in S\times S:f(x)\#
f(y)\}
\]
defined on $S$ is called the  \emph{co-kernel }of $f$. Now,
\emph{the first apartness isomorphism theorem } for sets with
apartness follows.

\begin{teo} \label{cmrth2}
Let $f : S\to T$ be an se-mapping between sets with apartness. Then
\begin{itemize}
\itemsep=0.95pt
\item[\emph{(i)}] the co-kernel of $f$ is a co-equivalence on $S$
 which defines an apartness on  $S/\ker\,f$;
\item[\emph{(ii)}]  the mapping $\theta:S/\ker
\,f\to T$, defined by $\theta(x(\ker\,f))=f(x)$, is a one-one,
a-injective se-mapping such that $f=\theta\circ\pi$;
\item[\emph{(iii)}] if $f$ maps $S$ onto $T$, then
$\theta$ is an apartness bijection.
\end{itemize}
\end{teo}

\begin{proof}
See \cite{mmscr}.
\end{proof}

Now,  \emph{the second apartness isomorphism theorem},  a
generalised version of Theorem~\ref{cmrth2}, for sets with apartness
follows.

\begin{teo} \label{basicfactor}
  Let $f:S\to T$ be a mapping between sets with apartness,  and let
  $\kappa$ be a co-equivalence on $S$ such that $\kappa\cap \mathrm{ker}\,f=\emptyset$. Then:
\begin{itemize}
  \item[\emph{(i)}]  $\kappa$ defines apartness on factor set $S/\mathrm{ker}\,f$;
\eject
  \item[\emph{(ii)}] the projection  $\pi: S\to S/\mathrm{ker}\,f$ defined by $\pi (x)= x(\mathrm{ker}\,f)$ is an onto se-mapping;

  \item[\emph{(iii)}] the mapping $f$ induces a one-one mapping $\theta:S/\mathrm{ker}\,f\to T$ given by $\theta(x(\mathrm{ker}\,f))=f(x)$, and $f=\theta\circ\pi$;

  \item[\emph{(iv)}] $\theta$ is an se-mapping if and only if $\mathrm{coker}\,f\subseteq\kappa$;

  \item[\emph{(v)}]  $\theta$ is a-injective if and only if $\kappa\subseteq \mathrm{coker}\,f$.
  \end{itemize}
\end{teo}

\begin{proof}
\indent (i). \  It follows from Theorem~\ref{BHM-1}(i).

(ii). \ It follows from Theorem~\ref{BHM-1}(ii).

(iii). \ This was shown in Theorem~\ref{cmrth2}.

(iv). \ Let $\theta$ be an se-mapping. Let $(x,y)\in
\mathrm{\mathrm{\mathrm{coker}}}\, f$ for some $x,y\in S$. Then, by
definition of $\mathrm{coker}\,f$ and $\theta$, the assumption and
(i), we have
\begin{eqnarray*}
f(x)\# f(y)  \ &\Leftrightarrow & \ \theta(x(\mathrm{ker}\,f))\#
\theta(y(\mathrm{ker}\,f))\\
&\Rightarrow  & \  x(\mathrm{ker}\,f) \# y(\mathrm{ker}\,f)\\
&\Leftrightarrow & \ (x, y)\in \kappa .
\end{eqnarray*}

Conversely, let $\mathrm{coker}\,f\subseteq\kappa$. By  assumption,
(i), (iii) and the definitions of $\theta$ and $\mathrm{coker}\,f$, we
have
\begin{eqnarray*}
\theta(x(\mathrm{ker}\,f))\# \theta(y(\mathrm{ker}\,f)) \
&\Leftrightarrow & \ f(x)\# f(y)\\
&\Leftrightarrow & \ (x,y)\in \mathrm{coker}\,f\\
&\Rightarrow  & \ (x,y)\in \kappa\\
&\Leftrightarrow & \ x(\mathrm{ker}\,f)\# y(\mathrm{ker}\,f).
\end{eqnarray*}

(v). \ Let $\theta$ be a-injective, and let $(x,y)\in \kappa$. Then,
by (iii), we have
\begin{eqnarray*}
x(\mathrm{ker}\,f) \# y(\mathrm{ker}\,f) \ &\Rightarrow  & \
\theta(x(\mathrm{ker}\,f))\# \theta(y(\mathrm{ker}\,f))\\
  & \Leftrightarrow & \ f(x)\# f(y) \\
&\Leftrightarrow & \ (x,y)\in \mathrm{coker}\,f.
\end{eqnarray*}

Conversely, let $\kappa\subseteq \mathrm{coker}\,f$. Then
\begin{eqnarray*}
x(\mathrm{ker}\,f) \# y(\mathrm{ker}\,f) \  & \Leftrightarrow & \
(x,y)\in \kappa\\
&\Rightarrow  & \ (x,y)\in \mathrm{coker}\,f\\
&\Leftrightarrow & \ f(x)\# f(y) \\
&\Leftrightarrow & \ \theta(x(\mathrm{ker}\,f))\#
\theta(y(\mathrm{ker}\,f)).
\end{eqnarray*}

\vspace*{-7mm}
\end{proof}

\begin{cor} \label{co11-bhm}
  Let $f:S\to T$ be a mapping between sets with apartness,  and let
  $\kappa$ be a co-equivalence on $S$ such that $\kappa\cap \mathrm{ker}\,f=\emptyset$. Then:
 \begin{itemize}
  \item[\emph{(i)}]  $f$ is an se-mapping if and only if  $\mathrm{coker}\,f$ is strongly irreflexive;
  \eject
  \item[\emph{(ii)}] if
$\theta:S/\mathrm{ker}\,f\to T$, defined by
  $\theta(x(\mathrm{ker}\,f))=f(x)$, is an se-mapping, then $f$ is an
  se-mapping too.
 \end{itemize}
 \end{cor}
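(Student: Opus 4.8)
The plan is to read both parts straight off the definitions and off Theorem~\ref{basicfactor}, since no new construction is required.

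For (i) I would just unwind the definitions. By definition $f$ is an se-mapping exactly when $\forall_{x,y\in S}\,(f(x)\# f(y)\Rightarrow x\# y)$, and since $(x,y)\in\mathrm{coker}\,f$ means precisely $f(x)\# f(y)$, this condition is literally $\forall_{x,y\in S}\,((x,y)\in\mathrm{coker}\,f\Rightarrow x\# y)$, i.e. that $\mathrm{coker}\,f$ is strongly irreflexive. (Nothing about inhabitedness is needed here: strong irreflexivity is meaningful for any subset of $S\times S$.) So (i) is a tautological rephrasing and I would present it in one line.

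For (ii) I would invoke the machinery already set up in Theorem~\ref{basicfactor}, which applies because we are given $\kappa\cap\mathrm{ker}\,f=\emptyset$: thus $\kappa$ defines an apartness on $S/\mathrm{ker}\,f$, $\pi$ is an onto se-mapping, and $f=\theta\circ\pi$. Now take $x,y\in S$ with $f(x)\# f(y)$; then $\theta(\pi(x))\#\theta(\pi(y))$, so, as $\theta$ is assumed to be an se-mapping, $\pi(x)\#\pi(y)$ in $S/\mathrm{ker}\,f$, that is $x(\mathrm{ker}\,f)\# y(\mathrm{ker}\,f)$. By (Ap6) for the apartness defined by $\kappa$ this is equivalent to $(x,y)\in\kappa$; and $\kappa$, being a co-equivalence, is in particular a co-quasiorder and hence strongly irreflexive, so $(x,y)\in\kappa$ yields $x\# y$. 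Therefore $f$ is an se-mapping. Equivalently, one may say that $f=\theta\circ\pi$ is a composite of the se-mappings $\pi$ (Theorem~\ref{basicfactor}(ii)) and $\theta$, and a composite of se-mappings is again an se-mapping — a one-line check.

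There is no real obstacle in this argument; the only things to keep an eye on are that the hypotheses of Theorem~\ref{basicfactor} are genuinely met (they are, by assumption) and that ``co-equivalence'' already carries strong irreflexivity along the chain co-equivalence $\Rightarrow$ co-quasiorder $\Rightarrow$ strongly irreflexive, which is exactly what converts apartness of classes back into apartness of points. Part (i) is a definitional unfolding and part (ii) is a two-step diagram chase, so the whole corollary falls out immediately once Theorem~\ref{basicfactor} is available.
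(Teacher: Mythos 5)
Your proof is correct and follows essentially the same route as the paper: part (i) is indeed just the observation that strong irreflexivity of $\mathrm{coker}\,f$ is literally the se-condition on $f$, and part (ii) uses Theorem~\ref{basicfactor} together with the strong irreflexivity of $\kappa$, exactly as the paper does (the paper merely packages your element chase as the inclusion $\mathrm{coker}\,f\subseteq\kappa$ from Theorem~\ref{basicfactor}(iv) followed by an appeal to (i)). No gaps.
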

\begin{proof}
\indent (i). \ Let $f$ be an se-mapping. Then, by Theorem~\ref{cmrth2},
$\mathrm{coker}\,f$ is strongly irreflexive. The converse is almost
obvious.

\smallskip
(ii). \ If $\theta$ is an se-mapping then, by
Theorem~\ref{basicfactor}(iv), we have that
$\mathrm{coker}\,f\subseteq\kappa$. So, the strong irreflexivity of
$\kappa$ implies the  strong irreflexivity of $\mathrm{coker}\,f$,
which, by (i), implies $f$ is an se-mapping.
\end{proof}

\begin{rem}
The notion of co-quasiorder first appeared in \cite{dar-96}.
However, let us mention that the results reported from
\cite{dar-96}: Theorem 0.4, Lemma 0.4.1, Lemma 0.4.2, Theorem 0.5
and Corollary 0.5.1 (pages 10-11 in \cite{dar-02}) are not correct.
Indeed, the mentioned filled product is not associative in general.
The notion of co-equivalence, i.e. a symmetric co-quasiorder, first
appeared in \cite{mb-dar}.
\end{rem}

\subsection{Semigroups with apartness}\label{s-32}

Given a set with apartness $(S,=,\#)$, the tuple $(S, =, \#, \,
\cdot )$ is a \emph{semigroup with apartness} if the binary
operation $\cdot$ is associative
\begin{itemize}
\item[(A)] $ \enspace\quad \forall _{a,b,c \in S} \ [(a\cdot b)\cdot c\, =\, a\cdot (b\cdot c)]$,
\end{itemize}
and strongly extensional
\begin{itemize}
\item[(S)] $ \enspace\quad \forall _{a,b,x,y \in S}\ (a\cdot x \#\, b\cdot y
\Rightarrow(a \#\, b  \,\vee \,  x \#\, y))$.
\end{itemize}

\noindent As usual, we are going to write $ab$ instead of $a\cdot
b$. For example, for a given set with apartness $A$ we can construct
a semigroup with apartness $S= A^A$ in the following way.

\begin{teo}\label{evid}
 Let $S$ be the set of all se-functions from $A$ to $A$ with the
standard equality $=$
$$f=\, g \ \Leftrightarrow \  \forall _{x\in A} \, (f(x)=\, g(x))$$ and
apartness $$f\#\, g \  \Leftrightarrow\ \exists _{x\in A} \,
(f(x)\#\,  g(x)).$$ Then $(S, =, \# , \circ )$ is a semigroup with
respect to the binary operation $\circ$ of composition of functions.
\end{teo}
\begin{proof}
See \cite{cmr2}.
\end{proof}
Until the end of of this paper, we adopt the convention that
\emph{semigroup} means \emph{semigroup with apartness}.
 Apartness  from  Theorem~\ref{evid} does not have to be
tight, \cite{cmr1}.

\medskip
Let $S$ and $T$ be semigroups with apartness. A mapping $f : S
\rightarrow T$ is a homomorphism if
$$\forall_{x,y\in S} \, (f(xy) = f(x)f(y)).$$
 A homomorphism $f$ is
\begin{itemize}
\itemsep=0.95pt
\item an \emph{se-embedding} if it is one-one and strongly extensional;
\eject
\item an \emph{apartness embedding} if it is a-injective se-embedding;

\item an \emph{apartness isomorphism} if it is apartness bijection and
se-homomorphism.
\end{itemize}

Within \textbf{CLASS}, the semigroups can be viewed, historically,
as an algebraic abstraction of the properties of the composition of
 transformations on a set. Cayley's theorem for semigroups (which
can be seen as an extension of the celebrated Cayley's theorem on
groups) stated that every semigroup can be embedded in a semigroup
of all self-maps on a set. As a consequence of the
Theorem~\ref{evid}, we can formulate \emph{the constructive Cayley's
theorem for semigroups with apartness} as follows.

\begin{teo}\label{Cayley}
Every semigroup with apartness se-embeds into the semigroup of all
strongly extensional self-maps on a set.
\end{teo}

\begin{proof}
See \cite{cmr2}.
\end{proof}

\begin{rem}
Following \cite{fr1}, the term ``constructive theorem'' refers to a
theorem with constructive proof. A classical theorem that is proven
in a constructive manner is a constructive theorem.
\end{rem}

It is a pretty common point of view that classical theorem becomes
more enlightening when it is seen from the constructive viewpoint.
On the other hand, it can not be said that the theory of
constructive semigroups with apartness aims at revising the whole
classical framework in nature.

\subsubsection{Co-quasiorders defined on a semigroup}\label{s-321}

We are going to encounter sd-subsets or sd-subsemigroups which have
some of the properties mentioned in Section~\ref{s-211}. A strongly
detachable convex (respectively completely isolated) subsemigroup of
$S$ is called, in  short, an \emph{sd-convex} (respectively
sd-completely isolated) subsemigroup of $S$. Similarly, there are
sd-convex and sd-completely isolated ideals of $S$.

\begin{lem}\label{lem3.3-4}
Let $S$ be a semigroup with apartness. The following conditions are
true:
\begin{itemize}
\item[\emph{(i)}] Let  $T$ be an sd-convex subset of a semigroup with apartness
$S$. If ${\sim}T$ is inhabited, then it is  an ideal of $S$.
\item[\emph{(ii)}] If $I$ is an sd-completely isolated ideal of a semigroup with apartness $S$,
then ${\sim} I$ is a convex subsemigroup of $S$.
\end{itemize}
\end{lem}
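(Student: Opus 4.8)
The plan is to verify each of the two statements directly from the definitions of the subsemigroup/ideal/convex notions (Section~\ref{s-211}), the definition of the apartness complement $\sim$, and the strong extensionality axiom (S) for the multiplication. The key tool throughout will be that strong extensionality, in contrapositive-free form, lets us pass from $x \# ab$ to $x \# a$ or $x \# b$ only when we can relate $x$ to a product; more precisely we will use (S) in the shape: if $ab \# cd$ then $a\#c$ or $b\#d$. Combined with the convexity or complete-isolation hypothesis on $T$ (resp. $I$), this should propagate apartness through products.

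For part (i), let $T$ be an sd-convex subset of $S$ with $\sim T$ inhabited, say $t_0 \in \sim T$. To show $\sim T$ is an ideal, take $a \in \sim T$ and $s \in S$; I must show $as \in \sim T$ and $sa \in \sim T$, i.e. $as \# y$ and $sa \# y$ for every $y \in T$. Fix $y \in T$. First I would like to produce an element of $T$ to ``test'' against; here is where convexity enters: I will use the sd-convexity of $T$, so for the element $as$ we have $as \in T$ or $as \in \sim T$. In the second case we are done. In the first case, $as \in T$ and $T$ convex gives $a \in T$ — but $a \in \sim T$ means $a \# t$ for all $t \in T$, and $a \in T$ would give $a \# a$, contradicting (Ap1). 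Hence $as \in \sim T$; symmetrically $sa \in \sim T$. So $\sim T$ is closed under multiplication by arbitrary elements of $S$ on both sides, i.e. it is an ideal (it is inhabited by hypothesis). Notice this argument is essentially the constructive version of Lemma~\ref{ccic}(ii), using the sd-detachability exactly where the classical proof uses the excluded middle.

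For part (ii), let $I$ be an sd-completely isolated ideal of $S$. I must show $\sim I$ is a subsemigroup, i.e. closed under multiplication: if $a,b \in \sim I$ then $ab \in \sim I$, meaning $ab \# z$ for all $z \in I$; and also that $\sim I$ is convex: if $ab \in \sim I$ then $a \in \sim I$ and $b \in \sim I$. For closedness, fix $z \in I$. Using sd-complete-isolation I would first try to apply detachability to $ab$: either $ab \in \sim I$ (done) or $ab \in I$; in the latter case complete isolation gives $a \in I$ or $b \in I$, each contradicting $a,b \in \sim I$ together with (Ap1) as before. For convexity of $\sim I$, suppose $ab \in \sim I$ and let $z \in I$; since $I$ is an ideal, $az \in I$ and $zb \in I$ (after adjoining an identity if necessary, in the style of $S^1$), and then strong extensionality applied to $ab \# (az)b$... — actually the cleaner route is: for $z \in I$ we have $zb \in I$ (ideal) and $az \in I$, and from $ab \# z'$ for suitable $z' \in I$ produced from $z$ via the ideal property, (S) yields $a \# z$ or $b \# z$-type conclusions; I would organize this so that $ab \in \sim I$ forces each of $a,b$ apart from every element of $I$.

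The main obstacle I anticipate is part (ii)'s convexity claim: unlike part (i), one does not get to apply detachability to $a$ and $b$ directly against $I$, so the only available engine is the strong extensionality axiom (S), and one must feed it the right products lying in $I$ (using that $I$ is an ideal) to split $ab \# (\text{something in }I)$ into apartness statements about $a$ and $b$ separately. Getting the bookkeeping right — in particular handling the case distinction cleanly and making sure the ``something in $I$'' is genuinely in $I$, possibly needing an $S^1$-adjunction or a direct argument avoiding it — is the delicate point. The closedness parts of both (i) and (ii), and the ideal property in (i), by contrast should follow routinely from sd-detachability plus (Ap1), mirroring the classical Lemma~\ref{ccic} almost verbatim.
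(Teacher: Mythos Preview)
Your arguments for (i) and for the subsemigroup half of (ii) are correct and match the paper's proof essentially verbatim: apply the sd property to the product, and derive a contradiction from the convexity (resp.\ complete isolation) hypothesis.

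For the convexity half of (ii) you take a different route, and your stated reason for doing so is mistaken. You write that ``unlike part (i), one does not get to apply detachability to $a$ and $b$ directly against $I$'' --- but you do: $I$ is strongly detachable, so for \emph{any} element of $S$, in particular for $a$ itself, either $a\in I$ or $a\in{\sim}I$. This is exactly the paper's argument: given $ab\in{\sim}I$, apply the sd property to $a$; if $a\in I$ then $ab\in I$ since $I$ is an ideal, contradicting $ab\in{\sim}I$; hence $a\in{\sim}I$, and likewise for $b$. No strong extensionality and no $S^1$-adjunction are needed. Your proposed alternative via (S) does in fact go through --- from $ab\in{\sim}I$ and $z\in I$ one has $zb\in I$, hence $ab\,\#\,zb$, and (S) together with (Ap1) yields $a\,\#\,z$; symmetrically $az\in I$ gives $b\,\#\,z$ --- and it is mildly interesting that this step then uses only the ideal property and (S), not the sd hypothesis. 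But it is more intricate than necessary, and the ``main obstacle'' you anticipated is illusory.
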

\begin{proof}
(i). \ Let $x,y,\in {\sim}T$. Let $a\in {\sim}T$ and $x\in S$. By
the assumption we have that
 $ax\in T$ or $ax\in\sim T$. If $ax\in T$,
 then, as $T$ is convex, we have $a\in
T$, which is impossible. Similarly, one can prove that $xa\in
{\sim}T$. So, ${\sim}T$ is an ideal of $S$.

\smallskip
(ii).\ In a similar manner as in (i) we can prove that ${\sim} I$ is
a subsemigroup of $S$.

Let $xy\in {\sim} I$. By the assumption, we have $x\in I$ or
$x\in\sim I$. If $x\in I$, then, as $I$ is an ideal, we have $xy\in
I$, which is impossible. Thus $x\in\sim  I$. Similarly, we can prove
that $y\in {\sim} I$. So, ${\sim} I$ is convex.
\end{proof}

Let us start with an example of a co-quasiorder defined on a
semigroup with apartness $S$.

\begin{ex}\label{ex-bhm1}
Let $S$ be a semigroup given by
\begin{center}
\begin{tabular}{c|c c c c c c} \label{tab:1}
  $\cdot$ & a & b & c & d & e \\\hline
  a & b & b & d & d & d \\
  b & b & b & d & d & d \\
  c & d & d & c & d & c \\
  d & d & d & d & d & d \\
  e & d & d & c & d & c
\end{tabular}
\end{center}
Let the equality on $S$ be  the diagonal $\triangle_S = \{ (a,a),
(b,b), (c,c), (d,d) , (e,e)\}.$ If we denote  by
 $K=\triangle_S \cup \{(a,b),(b,a)\}$, then we can define an apartness $\#$ on
 $S$  by  $(S\times S) \setminus  K$.
The  relation $\tau \subseteq S \times S$, defined by
$$\tau  = \{(c,a),(c,b),(d,a),(d,b),(d,c),(e,a),(e,b),(e,c), (e,d)\},$$
 is a co-quasiorder  on $S$.
\end{ex}

Let $\tau$ be a co-quasiorder defined on a semigroup $S$ with
apartness.  Following the classical results as much as possible,  we
can start with the following definition.

\medskip
A co-quasiorder $\tau$  on a semigroup $S$  is
\begin{itemize}
\itemsep=0.95pt
  \item \emph{complement positive} if $(a,ab), (a,ba)\in \sim\tau $ for any $a,b\in S$,
  \item with \emph{constructive common multiple property}, or, in short,
  with \emph{constructive cm-property}
   if $(ab,c) \in \tau \Rightarrow(a,c)\in \tau
\vee (b,c)\in \tau$  for all $a,b,c\in S$,
  \item with \emph{complement common multiple property}, or, in short,
  with \emph{complement cm-property} if $(a,c), (b,c)\in \sim \tau \Rightarrow
  (ab,c)\in \sim\tau$  for all $a,b,c\in S$.
\end{itemize}
Recall, by the Proposition~\ref{senegcompl}, $(\sim\tau)=
(\neg\tau)$.

\begin{ex}\label{ex2}
The co-quasiorder $\alpha$ defined on the semigroup $S$ considered
in Example~\ref{ex-bhm1} is not complement positive because we have
$(e,ea)=(e,d)\in \alpha$.
\end{ex}

\begin{ex}\label{pcq-1}
Let $S$ be the three element semilattice given by
\begin{center}
\begin{tabular}{c|c c c  } \label{tab:2}
  $\cdot$ & a & b & c \\\hline
  a & a & c & c  \\
  b & c & b & c \\
  c & c & c & c
\end{tabular}
\end{center}
Let the equality on $S$ be  the diagonal $\triangle_S = \{ (a,a),
(b,b), (c,c)\}$.
 We can define an apartness $\#$ on $S$
 to be  $(S\times S) \setminus \triangle_S $.
 Thus, $(S, = , \#, \cdot)$ is a
 semigroup with apartness.  The
 relation $\tau \subseteq S \times S$, defined by
$$\tau  = \{(a,b), (c,a),(c,b)\},$$
 is a complement positive co-quasiorder  on $S$.

\medskip
On the  other  hand, from $(ab,a)=(c,a)\in\tau$ neither $(a,a)$ nor
$(b,a)$  are in $\tau $, so $\tau$ does not have the constructive
cm-property. From $(a,a)\bowtie\tau$ and $(b,a)\bowtie\tau$, we have
$(ab,a)=(c,a)\in\tau$, and $\tau$ does not have the complement
cm-property as well.
\end{ex}

The following lemma shows how some sd-subsets lead us to positive
co-quasiorders.

\begin{lem}\label{lem4.1}
Let $S$ be a semigroup with apartness $S$.
\begin{itemize}
\item[\emph{(i)}] If $K$ is an sd-convex subset of  $S$, then the relation
$\tau$ defined by
$$
(a,b)\in \tau \stackrel{\rm def}{\Leftrightarrow} a \in\sim K \wedge
b \in K
$$
is a complement positive co-quasiorder on $S$.
\item[\emph{(ii)}] If $J$ is an sd-ideal of $S$ such that $J \subset S$, then the relation
$\tau$  defined by
$$
(a,b)\in \tau \stackrel{\rm def}{\Leftrightarrow} a \in J \wedge  b
\in\sim J
$$
is a complement positive  co-quasiorder  on $S$.
\end{itemize}
\end{lem}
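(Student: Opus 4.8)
\medskip
\noindent\emph{Proof sketch (plan).}
The plan is to treat the two parts in parallel: in each case first show that $\tau$ is a co-quasiorder, and then verify complement positivity by proving that the pairs $(a,ab)$ and $(a,ba)$ lie in the \emph{logical} complement $\neg\tau$ and appealing to Proposition~\ref{senegcompl}(ii) to pass from $\neg\tau$ to $\sim\tau$.

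For~(i), an sd-convex subset is in particular an sd-subset, so Lemma~\ref{lem3.1} applies directly and yields that $\tau$ is a co-quasiorder on $S$. To get complement positivity it then suffices, since $\sim\tau=\neg\tau$ for the co-quasiorder $\tau$ by Proposition~\ref{senegcompl}(ii), to check $\neg((a,ab)\in\tau)$ and $\neg((a,ba)\in\tau)$ for all $a,b\in S$. Assuming $(a,ab)\in\tau$ we get $a\in\,\sim K$ and $ab\in K$; convexity of $K$ then forces $a\in K$, contradicting $a\in\,\sim K$ by irreflexivity of $\#$. The argument for $(a,ba)$ is the same. I expect this to be routine; the one point worth isolating is that it is convexity, not merely complete isolation, that makes the implication $ab\in K\Rightarrow a\in K$ constructively available.

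For~(ii), Lemma~\ref{lem3.1} is not literally applicable, since the roles of a subset and of its a-complement are interchanged; so I would prove directly that $\tau$ is a co-quasiorder. Strong irreflexivity: $(a,b)\in\tau$ gives $a\in J$ and $b\in\,\sim J$, whence $b\# a$ and so $a\# b$. Co-transitivity: given $(a,b)\in\tau$ and $z\in S$, use that $J$, being an sd-ideal, is in particular an sd-subset, so $z\in J$ or $z\in\,\sim J$; in the first case $(z,b)\in\tau$, in the second $(a,z)\in\tau$. Complement positivity is handled exactly as in~(i): by Proposition~\ref{senegcompl}(ii) it is enough to show $\neg((a,ab)\in\tau)$ and $\neg((a,ba)\in\tau)$; assuming $a\in J$ and $ab\in\,\sim J$, the fact that $J$ is an ideal yields $ab\in J$, contradicting $ab\in\,\sim J$, and symmetrically for $ba$. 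Here the ideal property plays the role that convexity played in~(i).

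The only genuinely delicate (constructive) matter, and the one I would be most careful with, is inhabitedness: $\tau$ must be a bona fide (inhabited) relation, which requires the relevant a-complements to be inhabited. In~(ii) this is where $J\subset S$ is used, since for a strongly detachable subset properness already produces an element of $\sim J$; in~(i) one similarly needs $\sim K$ (and $K$) inhabited. Apart from this bookkeeping the whole argument is a short diagram chase, and the single thing one must avoid is identifying $\neg\tau$ with $\sim\tau$ other than through Proposition~\ref{senegcompl}(ii), whose validity rests precisely on $\tau$ being a co-quasiorder.
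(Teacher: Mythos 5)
Your argument is correct, and the substance (convexity forcing $a\in K$ from $ab\in K$ in~(i), the ideal property forcing $ab\in J$ in~(ii)) is exactly what the paper uses. The architecture differs in one place: to establish complement positivity the paper does not pass through $\neg\tau$ at all. It takes an arbitrary $(x,y)\in\tau$, applies co-transitivity to the chain $x,\,a,\,ab,\,y$ to get $(x,a)\in\tau\vee(a,ab)\in\tau\vee(ab,y)\in\tau$, eliminates the middle disjunct by the convexity/ideal argument, and then uses strong irreflexivity to conclude $(x,y)\#(a,ab)$ directly, i.e.\ $(a,ab)\bowtie\tau$. You instead prove only $\neg\big((a,ab)\in\tau\big)$ and upgrade via Proposition~\ref{senegcompl}(ii). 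That is legitimate -- the inclusion $\neg\tau\subseteq\sim\tau$ there is proved constructively from the qd-property of co-quasiorders -- so your proof is more modular, at the price of routing the co-transitivity work through an external proposition rather than exhibiting the apartness explicitly; the paper's version is self-contained and arguably more in the spirit of a ``positive'' constructive proof. Two smaller points: you are right that Lemma~\ref{lem3.1} does not literally cover part~(ii) (the paper cites it anyway; your direct verification of strong irreflexivity and co-transitivity is the honest fix, and it mirrors the proof of Lemma~\ref{lem3.1} with the roles of $J$ and $\sim J$ swapped). The inhabitedness remark is a fair observation about the paper's definition of a binary relation, but the paper itself does not address it in this lemma, so it is a side issue rather than a required step.
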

\begin{proof}
(i). \ By Lemma \ref{lem3.1}, $\tau$ is a co-quasiorder on $S$. Let
$(x,y)\in\tau$.  By the co-transitivity of $\tau$, we have $(x,a)\in
\tau \,\vee \,(a,ab)\in \tau \,\vee \,(ab,y)\in \tau$, for any
$a,b\in S$. If $(a,ab)\in \tau$, then, by the definition of $\tau$,
we have $a\in\sim K$ and $ab\in K$, and, as $K$ is a convex subset,
we have $a\in K$ and $b\in K$, which is impossible. So, we have $
(x,a)\in \tau \,\vee \,(ab,y)\in \tau$. By the strong irreflexivity
of $\tau$ we have $x \# a \,\vee \, ab\#y$,
 i.e. $(x,y)\# (a,ab)$. Thus, we have proved that
$(a,ab)\bowtie\tau$ for any $a,b\in S$. The proof of
$(a,ba)\bowtie\tau$ is similar. Therefore, $\tau$ is a complement
positive co-quasiorder on $S$.

\smallskip
(ii). \ By Lemma~\ref{lem3.1}, $\tau$ is a co-quasiorder on $S$. Let
$(x,y)\in\tau$. By the co-transitivity of $\tau$,  we have $(x,a)\in
\tau \,\vee \,(a,ab)\in \tau \,\vee \,(ab,y)\in \tau,$
 for any $a,b\in S$. If $(a,ab)\in \tau$, then, by the
definition of $\tau$, we have $a\in J$ and $ab\in\sim J$, which, as
$J$ is an ideal, further implies $ab\in J$, which is a
contradiction. The rest of the proof is similar to the arguments in
the proof of (i).
\end{proof}

By Proposition~\ref{p11-bhm}, if  any left/right-class of a
co-quasiorder defined on a set with apartness is a (strongly)
detachable subset, then \textbf{LPO} holds. This shows that Theorem
4.1 on a complement positive co-quasiorder (and Lemma 3.2 important
for its proof) from \cite{cmr2} cannot be proved outside
intuitionistic logic of constant domains \textbf{CD}. Nevertheless,
we can prove within intuitionistic logic the next theorem, which is
its weaker version, and another important result of this section.
The description of a complement positive co-quasiorder via its
classes follows.
\begin{teo}\label{thm4.11}
Let $\tau$ be  a co-quasiorder $\tau$ on a semigroup $S$.
\begin{itemize}
\item[\emph{(i)}] If $\tau$ is complement positive, then
$$\forall _{a,b \in S}\ \  (\tau(ab)\subseteq \tau a \cap \tau b).$$
\item[\emph{(ii)}] If $\tau a$ is an sd-ideal  of $S$ and $a \bowtie \tau a$ for every $a \in
S$, then $\tau$ is complement positive and
$$\forall _{a,b \in S}\ \ (a \tau \cup b \tau \subseteq (ab)\tau
).$$
\item[\emph{(iii)}] If $a\tau$ is an sd-convex subset of $S$, and  $a \bowtie a\tau$ for every $a \in
S$, then $\tau$ is a complement  positive co-quasiorder.
\end{itemize}
\end{teo}
\begin{proof}
(i).  \  Let $\tau$ be a complement positive co-quasiorder. For all
$a,b,x\in S$ such that $x \in \tau(ab)$, that is $ (x,ab)\in \tau$,
by the co-transitivity of $\tau$, we have
$$
((x,a)\in \tau \vee (a,ab)\in \tau )\wedge  ((x,b)\in \tau \vee
(b,ab)\in \tau).
$$
But, $\tau$  is complement positive, so that we have
 $(x,a)\in \tau \wedge
(x,b)\in \tau$, i.e. $x\in \tau a \cap \tau b$.

\smallskip
(ii). \  Let $(x,y)\in\tau$  and $a,b \in S$. Then, by the
co-transitivity of $\tau$,
$$
(x,a)\in \tau  \vee (a,ab)\in \tau  \vee (ab,y)\in \tau.
$$
If $a\in \tau (ab)$, then, as $\tau (ab)$ is an ideal, we have
$ab\in\tau (ab)$, which is, by the assumption, impossible. Now, by
the strong irreflexivity of $\tau$, we have $x\# a$ or $ab\# y$,
that is $(x,y)\# (a,ab)$. Thus, $(a,ab)\bowtie \tau$ for any $a,b\in
S$. $(a,ba)\bowtie\tau$ can be proved similarly. Thus, $\tau$ is a
complement  positive co-quasiorder.

\medskip
Let $x \in a \tau \cup b$, $x\in S$. By the co-transitivity and
complement positivity of $\tau$, we have
\begin{align*}
x \in a \tau \cup b \tau &\Leftrightarrow x \in a \tau \vee  x \in b \tau \\
&\Leftrightarrow (a,x)\in \tau \vee (b,x)\in \tau\\
&\Rightarrow ((a,ab)\in \tau \vee (ab,x)\in \tau) \vee ((b,ab)\in \tau \vee (ab,x)\in \tau )\\
&\Rightarrow (ab,x)\in \tau\\
& \Leftrightarrow x \in (ab)\tau.
\end{align*}

(iii). \ Let $(x,y)\in\tau$. Then, by the co-transitivity of $\tau$,
$$
(x,a)\in \tau \vee (a,ab)\in \tau  \vee (ab,y)\in \tau,
$$
for any $a,b\in S$. Let $(a,ab)\in \tau$, that is $ab\in a\tau$.
Then, by  assumption, $a\in a\tau$ (and $b\in a\tau$), which is
impossible. Now, by the strong irreflexivity of $\tau$, we have $x\#
a$ or $ab\# y$, that is, $(x,y)\# (a,ab)$. Thus $(a,ab)\bowtie \tau$
for any $a,b\in S$. $(a,ba)\bowtie \tau$ can be proved similarly.
Thus, $\tau$ is a complement positive co-quasiorder.
\end{proof}

\begin{teo} \label{cor4.1}
A complement positive co-quasiorder with the constructive
cm-property has the complement cm-property.
  \end{teo}
  \eject
\begin{proof}
Let $\tau$ be a complement positive co-quasiorder with the
constructive cm-property on a semigroup $S$ and let $a,b,c,x,y\in S$
be such that $(a,c),(b,c)\bowtie \tau$ and  $(x,y)\in\tau$. Then we
have
\begin{align*}
(x,y)\in \tau &\Rightarrow (x,ab)\in \tau \vee (ab,c)\in \tau \vee (c,y)\in \tau&&\text{by co-transitivity}\\
&\Rightarrow x \#ab \vee (a,c)\in \tau \vee (b,c)\in \tau \vee c \#
y&&\text{by strong reflexivity}\\
&   &&\text{ and by constructive cm-property}\\
&\Rightarrow (ab,c) \# (x,y)&&\text{since $(a,c)\bowtie \tau$ and
$(b,c)\bowtie \tau$.}
\end{align*}
Hence $(ab,c)\bowtie \tau$, i.e. $(ab,c)\in\sim\tau$.
\end{proof}

\subsubsection{Intuitionistic logic of constant domains CD as a background}

If we have a complement positive co-quasiorder $\tau$ on a semigroup
with apartness $S$, we can construct special subsets and semigroups
mentioned above. Some other criteria for a co-quasiorder to be
complement  positive will be given too.

\begin{teo}\label{thm4.1}
The following conditions for a co-quasiorder $\tau$ on a semigroup
$S$ are equivalent:
\begin{itemize}
\itemsep=0.9pt
\item[\emph{(i)}]  $\tau$ is complement positive;
\item[\emph{(ii)}]  $\forall _{a,b \in S}\ \  (a \tau \cup b \tau \subseteq (ab)\tau )$;
\item[\emph{(iii)}] $\forall _{a,b \in S}\ \  (\tau(ab)\subseteq \tau a \cap \tau b)$;
\item[\emph{(iv)}] $a\tau$ is an sd-convex subset of $S$ and  $a \bowtie a\tau$ for every $a \in S$;
\item[\emph{(v)}] $\tau a$ is an sd-ideal  of $S$ and $a \bowtie \tau a$ for every $a \in S$.
\end{itemize}
\end{teo}
\begin{proof}
(i) $\Rightarrow$ (iii), (v) $\Rightarrow$ (i), (v) $\Rightarrow$
(ii), (iv) $\Rightarrow$ (ii).  Those implications are proved in the
Theorem~\ref{thm4.11}.

\smallskip
(iii) $\Rightarrow$ (iv). By Lemma~\ref{lem3.2}, $a\tau$ is an
sd-subset of $S$ such that $a\bowtie a\tau$ for any $a\in S$. We
have
\begin{align*}
xy \in a \tau &\Leftrightarrow (a,xy)\in \tau\\
              &\Leftrightarrow a\in  \tau (xy) \subseteq \tau x \cap \tau y\\
              &\Rightarrow a \in \tau x \wedge  a \in \tau y\\
              &\Leftrightarrow x \in a \tau \wedge y \in a \tau.
\end{align*}
So, $a\tau$ is an sd-convex subset for any $a\in S$.

\smallskip
(i) $\Rightarrow$ (v). \   By Lemma~\ref{lem3.2}, $\tau a$ is an
sd-subset of $S$ such that $a\bowtie \tau a$ for any $a\in S$. Let
$a,x\in S$ be such that $x\in \tau a$, i.e. $(x,a)\in \tau$. By the
co-transitivity of $\tau$, we have
$$
(x,xs)\in \tau \vee (xs,a)\in \tau,
$$
for any $s\in S$. But, as $\tau$ is positive, we have only
$(xs,a)\in\tau$, i.e $xs\in\tau a$. In the same way one can prove
that $sx\in \tau a$. Thus, $\tau a$ is an ideal of $S$ for any $a\in
S$.

\smallskip
(ii) $\Rightarrow$ (v). \  By Lemma~\ref{lem3.2}, $\tau a$ is an
sd-subset of $S$, and $a\bowtie \tau a$ for any $a\in S$. Now, let
$x\in \tau a$ and $s\in S$. Then, by the co-transitivity of $\tau$,
we have $(x,xs)\in \tau$ or $(xs,a)\in\tau$. If $(x,xs)\in \tau$,
then $xs\in x\tau \subseteq x\tau \cup s\tau \subseteq  (xs)\tau$,
which is, by Lemma~\ref{lem3.2}, impossible. Thus $(xs,a)\in\tau$.
As $(sx,a)\in\tau$ can be proved similarly, we have proved that
$\tau a$ is an sd-ideal of $S$.
\end{proof}

\begin{teo}  \label{thm4.2}
Let $\tau$  be a complement positive co-quasiorder on a semigroup
$S$. The following conditions are equivalent:
\begin{itemize}
\item[\emph{(i)}] $\tau$ has the constructive cm-property;
\item[\emph{(ii)}]  $\forall _{a,b\in S}\ ((ab)\tau = a \tau  \cup  b \tau )$;
\item[\emph{(iii)}] $\tau a$ is an sd-completely isolated ideal of $S$ such that $a\bowtie\tau
a$ for any $a\in S$.
\end{itemize}
\end{teo}
\begin{proof}
(i) $\Rightarrow$ (ii). \ By Theorem~\ref{thm4.1}, $a\tau\cup b\tau
\subseteq (ab)\tau$ for all $a,b\in S$. To prove the converse
inclusion, take $x\in (ab)\tau$. Then we have
\begin{align*}
 x \in (ab)\tau &\Leftrightarrow (ab,x)\in \tau&&\\
&\Rightarrow (a,x)\in \tau \vee (b,x)\in \tau&&\text{by the cm-property}\\
&\Leftrightarrow x \in a \tau \vee  x \in b \tau\\
&\Leftrightarrow x \in a\tau  \cup  b \tau.
\end{align*}

(ii) $\Rightarrow$ (iii). \  By Theorem~\ref{thm4.1}, $\tau a$ is an
sd-ideal of $S$ such that  $ a\bowtie \tau a$, for any $a\in S$. Let
$x,y\in S$  be such that $xy \in \tau a$. Then $a \in(xy)\tau = x
\tau \cup  y \tau$ by the assumption. Thus, $a \in x \tau$ or $a \in
y \tau$. So, $x \in \tau a$ or $y \in \tau a$, and $\tau a$ is an
sd-completely isolated ideal of $S$ for any $a\in S$.

\smallskip
(iii) $\Rightarrow$ (i). Let $a,b,c\in S$ be such that $(ab,c)\in
\tau $. Then, $ab \in  \tau c$ and, since $\tau c$ is completely
isolated, $a \in \tau c$ or $b\in  \tau c$, which means that
$(a,c)\in \tau$  or $(b,c)\in \tau$.
\end{proof}

Following Bishop, every classical theorem presents the challenge:
find a constructive version with a constructive proof.  This
constructive version can be obtained by strengthening the conditions
or weakening the conclusion of the theorem. There are, often,
several constructively different versions of the same classical
theorem.

\medskip
Comparing the obtained results for complement positive
co-quasiorders with the parallel ones for positive quasiorders in
the classical background, we can conclude that the classical
Theorem~\ref{bsc1} breaks into two new ones in the constructive
setting:
\begin{itemize}
\itemsep=0.9pt
\item Theorem~\ref{thm4.11} obtained by  weakening the
conclusions,
\item Theorem~\ref{thm4.1}   obtained by
strengthening the conditions - here strengthening the logical
background.  Recall that intermediate logic proves
intuitionistically as well as   classically valid theorems, yet they
often possess a strong constructive flavour.
\end{itemize}
In addition, there are two definitions: those of the constructive
cm-property, and the complement  cm-property.  Nevertheless, the
last definition, Theorem~\ref{cor4.1}, is stronger.

\begin{rem}
For some classical theorems it is shown that they are not provable
constructively. Some  classical theorems are neither provable nor
disprovable, that is, they are independent of \rm{\textbf{BISH}}.
\end{rem}

\subsubsection{QSP for semigroups with apartness}\label{s-322}

Let us remember that in \textbf{CLASS} the compatibility property is
an important condition for providing the semigroup structure on
quotient sets. Now we are looking for the tools for introducing an
apartness relation on a factor semigroup. Our starting point is the
results from   Subsection~\ref{s-313}, as well as the next
definition.

\medskip
A co-equivalence $\pmb{\kappa}$ is a \emph{co-congruence} if it is
\emph{co-compatible}
$$\forall _{ a,b,x, y\in S} \, ((ax,by)\in\kappa \ \Rightarrow \
(a,b)\in\kappa \vee (x,y)\in\kappa )$$

\begin{teo}  \label{cmrth3}
Let $S$ be a semigroup with apartness. Then
\begin{itemize}
\itemsep=0.95pt
\item[\emph{(i)}] Let $\mu$ be a congruence, and $\kappa$ a co-congruence on $S$. Then, $\kappa$
  defines an apartness  on the factor set $S/\mu$ if and only if $\mu\cap\kappa =
  \emptyset$.
\item[\emph{(ii)}]  The quotient mapping $\pi : S \rightarrow
S/\mu$, defined by $\pi (x)  = x\mu$, is an onto se-homomorphism.
\end{itemize}
\end{teo}
\begin{proof}
(i). \ If $\kappa$ defines an apartness on $S/\mu$, then, by Theorem~\ref{BHM-1}(i), $\mu\cap\kappa=\emptyset$.

Let $\mu$ be a congruence and $\kappa$ a co-congruence on a
semigroup with apartness $S$ such that $\mu\cap\kappa=\emptyset$.
Then, by Theorem~\ref{BHM-1}(i), $\kappa$ defines apartness
on$S/\mu$.

Let $a\mu\, x\mu \# b\mu \, y\mu$, then $(ax)\mu \# (bx)\mu$ which further, by the definition of apartness on $S/\mu$, ensures that
$(ax,by)\in\kappa$. But $\kappa$ is a co-congruence, so either $(a,b)\in\kappa$ or $(x,y)\in\kappa$. Thus, by the definition of
apartness in $S/\mu$ again, either $a\mu \# b\mu$ or $x\mu \# y\mu$. So $(S/\mu ,=,\#,\cdot\,)$ is a semigroup with apartness.

\smallskip
(ii).\ By Theorem~\ref{BHM-1}(ii),  $\pi$ is an onto se-mapping. By
(i) and   assumption, we have
\[
\pi(xy)=(xy)(\sim\kappa)=x(\sim\kappa)\,y(\sim\kappa)=\pi(x)\pi(y).
\]
Hence $\pi$ is a homomorphism.
\end{proof}

As a consequence of Theorem~\ref{cmrth3} and Corollary~\ref{mhb-c32}
we have the next corollary.

\begin{cor}\label{mhb-c1}
If $\kappa$ is a co-congruence on $S$, then the relation
${\sim}\kappa(={\neg} \kappa)$ is a congruence on $S$, and $\kappa$
defines an apartness on $S/\sim\kappa$.
\end{cor}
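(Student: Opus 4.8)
The plan is to deduce Corollary~\ref{mhb-c1} almost immediately from the two results it is advertised to follow from, namely Theorem~\ref{cmrth3} and Corollary~\ref{mhb-c32}. First I would recall that a co-congruence $\kappa$ is in particular a co-equivalence (a symmetric co-quasiorder with the extra co-compatibility axiom), so Corollary~\ref{mhb-c32} applies verbatim and tells us that $\sim\kappa$ is an equivalence on $S$, that $\sim\kappa = \neg\kappa$, and that $\kappa$ defines an apartness on the factor set $S/\sim\kappa$. The only thing that remains is to upgrade the word ``equivalence'' to ``congruence'', i.e. to check the compatibility property for $\sim\kappa$, and then to observe that the apartness just obtained is compatible with the induced multiplication so that $S/\sim\kappa$ is genuinely a semigroup with apartness.

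For the compatibility of $\sim\kappa$, I would argue by contraposition using the co-compatibility of $\kappa$. Suppose $(a,b)\in\sim\kappa$ and $(x,y)\in\sim\kappa$; I want $(ax,by)\in\sim\kappa$, equivalently (by Proposition~\ref{senegcompl} applied to the co-quasiorder $\kappa$) $(ax,by)\in\neg\kappa$. So assume $(ax,by)\in\kappa$. Co-compatibility gives $(a,b)\in\kappa$ or $(x,y)\in\kappa$, either of which contradicts the hypothesis that $(a,b),(x,y)\in\sim\kappa=\neg\kappa$. Hence $(ax,by)\in\sim\kappa$, and $\sim\kappa$ is a congruence.

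Finally I would invoke Theorem~\ref{cmrth3} with $\mu := \sim\kappa$: since $\sim\kappa$ is a congruence, $\kappa$ a co-congruence, and $\kappa\cap\sim\kappa=\emptyset$ (immediate, because $(x,y)\in\sim\kappa$ implies $x\# y$ while $(x,y)\in\kappa$ gives $\neg(x\#x)$ after co-transitivity, or simply because $\sim\kappa=\neg\kappa$), part (i) of that theorem yields that $\kappa$ defines an apartness on $S/\sim\kappa$ and that $(S/\sim\kappa,=,\#,\cdot)$ is a semigroup with apartness, while part (ii) gives that the canonical projection is an onto se-homomorphism. Stringing these together establishes the corollary.

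I do not expect any genuine obstacle here: the statement is a routine specialisation and every ingredient is already in place. The only point requiring a moment's care is the bookkeeping that a co-congruence is indeed both a co-equivalence (so Corollary~\ref{mhb-c32} is legitimately applicable) and has the co-compatibility axiom available for the congruence-compatibility step; once that is noted, the proof is a two-line assembly of earlier results, exactly as the phrase ``As a consequence of Theorem~\ref{cmrth3} and Corollary~\ref{mhb-c32}'' suggests.
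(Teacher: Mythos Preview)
Your proposal is correct and follows exactly the route the paper intends: the paper offers no proof beyond the sentence ``As a consequence of Theorem~\ref{cmrth3} and Corollary~\ref{mhb-c32}'', and you have simply unpacked that sentence, supplying the one missing detail (compatibility of $\sim\kappa$ via co-compatibility of $\kappa$ and $\sim\kappa=\neg\kappa$) and then feeding $\mu:=\sim\kappa$ into Theorem~\ref{cmrth3}. The only slip is the garbled first justification of $\kappa\cap\sim\kappa=\emptyset$ in your parenthetical, but your second reason (``simply because $\sim\kappa=\neg\kappa$'') is the correct one, so nothing is lost.
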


\emph{The apartness isomorphism theorem for semigroups with
apartness} follows.

\begin{teo} \label{cmrth4}
Let $f: S\rightarrow T$ be an se-homomorphism between semigroups
with apartness. Then:
\begin{itemize}
\item[\emph{(i)}]   $\mathrm{coker}\,f$ is a co-congruence on
$S$, which defines an apartness on  $S/\ker\,f$,
\item[\emph{(ii)}]  the
mapping $\theta:S/\ker \,f\to T$, defined by
$\theta(x(\ker\,f))=f(x)$, is an apartness embedding such that
$f=\theta\circ\pi$; and
\item[\emph{(iii)}] if $f$ maps $S$ onto $T$, then
$\theta$ is an apartness isomorphism.
\end{itemize}
\end{teo}

\begin{proof}
See \cite{mmscr}.
\end{proof}

Recall, following \cite{fr1}, \textbf{BISH} (and constructive
mathematics in general) is not the study of constructive things, it
is a constructive study of things. In constructive proofs of
classical theorems, only constructive methods are used.

Although  constructive theorems might look like the corresponding
classical versions, they often have more complicated hypotheses and
proofs. Comparing Theorem~\ref{sth2} (respectively
Theorem~\ref{sth21}) for classical semigroups and
Theorem~\ref{cmrth4} (respectively Theorem~\ref{cmrth3}) for
semigroup with apartness, we have  evidence for that.

\section{Concluding remarks}\label{s-4}

During the implementation of the FTA Project \cite{gpwz2}, the
notion of commutative constructive semigroups with tight apartness
appeared. We put noncommutative constructive semigroups with
``ordinary'' apartness in the centre of our study, proving first, of
course, that such  semigroups do exist. Once again we want to
emphasize that semigroups with apartness are a \textbf{new
approach}, and not a new class of semigroups.

\medskip
Let us give some examples of applications of  ideas presented in the
previous section. We will start with constructive analysis. The
proof of one of the directions of the constructive version of the
Spectral Mapping Theorem is based on some elementary constructive
semigroups with inequality techniques, \cite{bh}. It is also worth
mentioning the applications of commutative basic algebraic
structures with tight apartness within the automated reasoning area,
\cite{gc}. For possible applications within computational linguistic
see \cite{mam}.  Some topics from mathematical economics can be
approached constructively too (using some order theory for sets with
apartness), \cite{bb}. Contrary to the classical case, the
applications of constructive semigroups with apartness, due to their
novelty, constitute an unexplored area. In what follows some
possible connections between semigroups with apartness and computer
science are sketched, \cite{mmscr}.

\medskip\smallskip

semigroups with apartness           \quad\quad\quad\quad semigroups
with apartness

$\updownarrow$
\quad\quad\quad\quad\quad\quad\quad\quad\quad\quad\quad\quad\quad\quad\quad\quad
$\updownarrow$

bisimulation \quad\quad\quad\quad\quad\quad\quad\quad\quad\quad
automated theorem proving

$\updownarrow$
\quad\quad\quad\quad\quad\quad\quad\quad\quad\quad\quad\quad\quad\quad\quad\quad
$\updownarrow$

formal reasoning about processes    \quad knowledge representation
                                            and automated reasoning

$\updownarrow$
\quad\quad\quad\quad\quad\quad\quad\quad\quad\quad\quad\quad\quad\quad\quad\quad
$\updownarrow$

process algebra \quad\quad\quad\quad\quad\quad\quad\quad\quad
artificial intelligence

$\updownarrow$

transactions and concurrency

$\updownarrow$

databases

\smallskip

\vfil\eject

\noindent One of the directions of future work is to be able to say
more about those links. The study of basic constructive algebraic
structures with apartness as well as constructive algebra as a whole
can impact the development of other areas of constructive
mathematics. On the other hand, it can make both proof engineering
and programming more flexibile.

Although the classical theory of semigroups  has been considerably
developed in the last decades, constructive mathematics has not paid
much attention to   semigroup theory. One of our main scientific
activities will be to further develop  of the constructive theory of
semigroups with apartness. Semigroups will be examined
constructively,  that is with intuitionistic logic. To develop this
constructive theory of semigroups with apartness, we need first to
clarify the notion of a set with apartness. The initial step towards
grounding the theory is done by our contributing papers \cite{cmr1},
\cite{cmr2}, \cite{mmss2}, \cite{mmscr}, \cite{md}  - a critical
review of some of  those results as well as the solutions to some of
the open problems arising from those papers are presented in
Section~\ref{s-3}.

Why should a mathematician choose to work in this manner? As it is
written in one of the reviews of  Errett Bishop's monograph
\emph{Foundations of functional analysis}, \cite{gs}, ``to replace
the classical system by the constructive one does not in any way
mutilate the great classical theories of mathematics. Not at all. If
anything, it strengthens them, and shows them, in a truer light, to
be far grander than we had known.'' At heart, Bishop's constructive
mathematics is simply mathematics done with intuitionistic logic,
and may be regarded as ``constructive mathematics for the working
mathematician'', \cite{t1}. The main activity in the field consists
in proving theorems rather than demonstrating the unprovability of
theorems (or making other metamathematical observations),
\cite{mjb}. ``Theorems are tools that make new and productive
applications of mathematics possible,'' \cite{jtw}.

The theory of semigroups with apartness is, of course, in its
infancy, but, as we have already pointed out, it promises  a
prospective of applications in other (constructive) mathematics
disciplines, certain areas of computer science, social sciences,
economics.

To conclude, although one of the main motivators for initiating and
developing the theory of semigroups with apartness comes from the
computer science area, in order to have profound  applications, a
certain  amount of the theory, which  can be applied, is necessary
first. Among priorities, besides the growing the general theory, are
further developments of: constructive relational structures -
(co)quotient structures in the first place, constructive order
theory, theory of ordered semigroups with apartness, etc.

The  Summary of the European Commission's  \emph{Mathematics for
Europe}, June 2016, \cite{ec}, states that ``mathematics should not
only focus on nowadays' applications but should leave room for
development, even theoretical, that may be vital tomorrow.'' With a
strong  belief in the tomorrow's vitalness of the theory of
semigroups with apartness, the focus should be  on its further
development. On the other hand, it is useful to ``leave room'' for
``nowadays' applications'' as well. All those will  represent the
core of our forthcoming papers.

\subsection*{Acknowledgements}

   The authors are grateful to anonymous referees for careful reading   of the manuscript and helpful comments. M. M. is supported
   by the Faculty of Mechanical Engineering, University of Ni\v s, Serbia, Grant ``Research and development of new generation machine
   systems in the function of the technological development of Serbia''. M. N. H. is supported by TWAS Research
   Grant RGA No. 17-542 RG / MATHS / AF / AC \_G -FR3240300147. The ICMPA-UNESCO Chair is in partnership
   with Daniel Iagolnitzer Foundation (DIF), France, and the Association pour la Promotion Scientifique de l'Afrique (APSA),
   supporting the development of mathematical physics in Africa.




\begin{thebibliography}{50}
\providecommand{\url}[1]{\texttt{#1}}
\providecommand{\urlprefix}{URL }
\expandafter\ifx\csname urlstyle\endcsname\relax
  \providecommand{\doi}[1]{doi:\discretionary{}{}{}#1}\else
  \providecommand{\doi}{doi:\discretionary{}{}{}\begingroup
  \urlstyle{rm}\Url}\fi
\providecommand{\eprint}[2][]{\url{#2}}

\bibitem {fa}
 Aschieri F.
  \emph{On Natural Deduction for Herbrand Constructive Logics III: The Strange Case of the Intuitionistic Logic of Constant
Domains}. In Berardi S., Miquel A. eds.: CL\&C 2018 EPTCS 281, 2018, pp. 1-9, doi:10.4204/EPTCS.281.1.

\bibitem {bb}
Baroni M,  Bridges DS.
 \emph{Continuity properties of preference relations}, Mathematical Logic Quarterly
2008. 54(5):454--459. doi:10.1016/j.entcs.2008.03.004


\bibitem {mjb}
 Beeson MJ.
  Foundations of Constructive Mathematics, Springer-Verlag, 1985.  ISBN:978-3-642-68952-9.

\bibitem {eb}
 Bishop E.
 Foundations of Constructive Analysis, McGraw-Hill, New York, 1967.

\bibitem {bdb}
 Bishop E,  Bridges DS.
  Constructive Analysis, Grundlehren der mathematischen Wissenschaften 279, Springer, Berlin, 1985.
  ISBN-10:364264905X, 13:978-3642649059.

\bibitem {bc}
 \'Ciri\'c M,  Bogdanovi\'c S.
  \emph{The lattice of positive quasi-orders on a semigroup}, Israel Journal of Mathematics
  1997. 98(1):157--166.

\bibitem {boo}
 Boorman SA, White HC.
\emph{Social structures from multiple networks. II. Role structures}, The American Journal of Sociology,
1976.  81(6):1384--1446.  doi:10.1086/226228.

\bibitem {bjp}
 Boyd JP.
  Social semigroups, a unified theory of scaling and blockmodelling as applied to social networks,
  George Mason University Press, 1991.  ISBN-10:0913969346, 13:978-0913969342.

\bibitem {mb-dar}
 Bo\v zi\'c M,  Romano DA.
  \emph{Relations, functions and operations in the constructive mathematics},  Publ. VTS, Ser.A: Math.,
  1987. 2:25--39.

\bibitem{dsb}
 Bridges DS.
  \emph{A constructive look at the real number line}. In Ehrlich P., ed.: Real Numbers,  Generalizations of the Reals, and Theories of
Continua, Kluwer Academic Publishers, 1994.  doi:10.1007/978-94-015-8248-3\_2.

\bibitem {dbre}
Bridges DS,  Reeves S.
 \emph{Constructive Mathematics in Theory and Progamming Practic}e, Philosophia Mathematica (3)
 1999.  7(1):63--104.  doi:10.1093/philmat/7.1.65.

\bibitem {bh}
 Bridges DS,  Havea R.
  \emph{A Constructive Version of the Spectral Mapping Theorem}, Mathematical Logic Quarterly
  2001.  47(3):299--304.   ISSN-0942-5616.

\bibitem {dbv2}
 Bridges DS,    V\^{\i}\c{t}\={a} LS.
  Apartness and Uniformity - A Constructive Development, CiE series on Theory and Applications of
Computability, Springer, 2011.  ISBN-13:978-3642224140, 10:3642224148.

\bibitem{lejb}
  Brouwer LEJ.
 \emph{De onbetrouwbaarheid der logische principes, Tijdschrijt voor wijsbegeerte 2}, 152--158, 1908. English translation:
 \emph{The unreliability of the logical principles}. In Heyting A., ed.: Collected Works, Vol. 1 , North-Holland, Amsterdam
 1975 pp. 107--111.

\bibitem{mb}
Broy M.
 \emph{Seamless model driven systems engineering based on formal models }. In Breitman K., Cavalcanti A., eds.: ICFEM 2009 (LNCS 5885),
 2009 pp. 1--19.  doi:10.1007/978-3-642-10373-5\_1.

\bibitem {gc}
 Calder$\rm{\acute{o}}$n G.
  \emph{Formalizing constructive projective geometry in Agda}, 2017, The Proceedings of The 12th
  Workshop on Logical and Semantic Frameworks, with Applications, LSFA 2017 (Brasilia, 23-24 September 2017 - http://lsfa2017.cic.unb.br/).

\bibitem {cmr1}
 Crvenkovi\'c S,  Mitrovi\'c M,  Romano DA.
  \textit{Semigroups with Apartness}, Mathematical Logic Quarterly,
  2013. 59(6):407--414.   doi:10.1002/malq.201200107.

\bibitem {cmr2}
 Crvenkovi\'c S,  Mitrovi\'c M,  Romano DA.
  \textit{Basic Notions of (Constructive) Semigroups with Apartness}, Semigroup Forum,
  Volume 92, Issue 3, June 2016 pp. 659--674.  doi:10.1007/s00233-016-9776-y.

\bibitem{ddv}
 van Dalen D, and  de Vries F-J.
  \emph{Intuitionistic free Abelian groups}, Mathematical Logic Quarterly,
  1988. 34(1):3--12.  doi:10.1002/malq.19880340102.

\bibitem{md}
 Darp$\ddot{o}$ E,  Mitrovi\'c M.
  \emph{Some results in constructive semigroup theory}, arXiv:2103.07105.

\bibitem{ec}
European Commission, \emph{Mathematics for Europe}, June 2016,

\bibitem{gm}
 Ganyushkin O,  Mazorchuk V.
  Classical Finite Transformation Semigroups, Algebra and Applications, Volume 9, Springer, Berlin, 2009. ISBN-978-1-84800-281-4.

\bibitem{ag}
 Grzegorczyk A.
   \emph{A philosophically plausible formal interpretation of intuitionistic logic}, Koninklijke Nederlandse Akademie van Wetenschappen,
   Proceedings, series A, vol. 67 (1964), pp. 596-601; also Indagationes mathematicae, vol. 26, 1964 pp. 596--601.

\bibitem{gwz}
 Geuvers H,  Wiedijk F,  Zwanenburg J.
  \emph{A constructive proof of the Fundamental theorem of algebra without using the rationals},
2002. In Callaghan P., Luo Z., McKinna J., Pollack R., eds.: Proceedings of TYPES 2000 Workshop Durham UK.
(Springer-Verlag) LNCS 2277 pp. 96--111.  doi:10.1007/3-540-45842-5\_7.

\bibitem {gpwz2}
 Geuvers H,  Pollack R,  Wiedijk F,  Zwanenburg J.
  \emph{A constructive algebraic hierarchy in Coq}, J. Symb. Comput., 2002. \textbf{34}:271--286.
doi:10.1006/jsco.2002.0552.

\bibitem{hr}
 Harper R.
  Practical Foundations for Programming Languages, Cambridge University Press, 2013.
  ISBN-10:1107029570, 13:978-1107029576.

\bibitem {ah3}
 Heyting A.
  \textsl{Intuitionistische axiomatick der projectieve meetkunde}, Thesis, P. Noordhoof, 1925.

\bibitem {ah1}
 Heyting A.
  \emph{Zur intuitionistischen Axiomatik der projektiven Geometrie}, Mathematische Annalen,
  1927. 98:491--538.  ISSN-0025-5831; 1432-1807/e.

\bibitem {ah4}
 Heyting A.
   \textsl{Untersuchungen \"{u}ber intuitionistische Algebra}, Nederl. Akad. Wetensch. Verh. Tweede Afd. Nat. 18/2, 1941.

\bibitem {ah2}
 Heyting A.
  \textsl{Intuitionism, an Introduction}, North - Holland, 1956.


\bibitem{ih}
Ishihara H.
 \emph{Informal Constructive Reverse Mathematics},
CDMTCS Research Report 229, University of Auckland, 2004.   ID-117964913.

\bibitem{bj}
Jacobs B.
 \emph{Bisimulation and Apartness in Coalgebraic
Specification}; 1995 - Notes of lectures given in January 1995 at
the joint TYPES/CLICS workshop in Gothenburg and BRICS seminar in
Aarhus.

\bibitem {jtw}
 Judson TW.
  Abstract Algebra Theory and Applications, Orthogonal Publishing L3C, 2015.
ISBN-10:0989897591, 13:9780989897594.

\bibitem{gl}
 Lallement G.
   Semigroups and combinatorial applications,  J. Wiley \& Sons, New York,  1979.


\bibitem {lp}
 Lidl R, Pilz G.
  Applied abstract algebra, Springer, 1998.

\bibitem{ml}
 Martin-L$\ddot{o}$f P.
  \emph{Constructive mathematics and computer programming}, In Kohen L. J., Los J., Pfeiffer H., Podewski K. P., eds.: The Proceedings of the Sixth International Congress of Logic, Methodology and Philosophy of Science,  North-Holland Publishing Company, Amsterdam.
  1982. 104:153--175.  doi:10.1016/S0049-237X(09)70189-2.


\bibitem{amgp}
 Mikhalev AV,  Pilz GF.
  The Concise Handbook of Algebra, Springer Science + Business Media, B. V., 2002.
  ISBN:978-94-017-3267-3.

\bibitem {mrr}
Mines R,  Richman F, Ruitenburg W.
 A Course of Constructive Algebra, New York: Springer-Verlag, 1988. ISBN-978-0-387-96640-3.

\bibitem{mm}
 Mitrovi\'c M.
  Semilattices of Archimedean Semigroups, University of Ni\v s - Faculty of Mechanical Engineering, Ni\v s, 2003.


\bibitem{mmss2}
 Mitrovi\'c M,  Silvestrov S.
  \emph{(Apartness) Isomorphism theorems for basic constructive algebraic structures with special emphasize
on constructive semigroups with apartness - an overview}. In Silvestrov S., Malyarenko M., Ran\v ci\'c M., eds.: Stochastic Processes and Algebraic Structures -From Theory Towards Applications. Volume II: \texttt{Algebraic Structures and Applications}, V\"{a}ster\.{a}s and Stockholm, Sweden, October 4-6, Springer 2020 pp. 653--686.  doi:10.1007/978-3-030-41850-2\_28.


\bibitem{mmscr}
 Mitrovi\'c M,  Silvestrov S,  Crvenkovi\'c S,  Romano DA.
\emph{Constructive semigroups with apartness: towards new algebraic
theory},  Journal of Physics : Conference Series (JPCS), Volume
1194, 2019, 012076,  doi:10.1088/1742-6596/1194/1/012076.


\bibitem{mam}
M. A. Moshier, \emph{ A rational reconstruction of the domain of
feature structures}, J. of Logic,   Language and Information, 1995.
\textbf{4}(2):111--143.  doi:10.1007/BF01048617.

\bibitem{p}
Petrich M.
 Introduction to semigroups. Merill, Ohio, 1973.

\bibitem{msp1}
Putcha MS. \emph{Paths in graphs and minimal $\pi$-sequences in
semigroups}, Discrete Mathematics 1975. 11:173--185.  doi:10.1016/0012-365X(75)90009-6.

\bibitem{rd}
 Reddy PS,  Dawud M.
  \emph{Applications of Semigroups}, Global Journal of Science Frontier Research: F Mathematics and Decision
Sciences,  Volume 15, Issue 3, 2015.   ISSN-2249-4626,  0975-5896.

\bibitem{fr2}
Richman F.
  \emph{Constructive aspect of Noetherian rings}, Proc. Am. Math. Soc., 1974. 44(2):436--441.
  doi:10.2307/2040452.

\bibitem{fr1}
 Richman F.
  \emph{Interview with a constructive mathematician}, Modern Logic, 1996. 6(3):247--271.
 ID:118795904.

\bibitem{dar-88}
 Romano DA.
  \emph{Rings and fields, a constructive view}, Mathematical Logic Quarterly,
  1988. 34(1):25--40.   doi:10.1002/malq.19880340105.

\bibitem{dar-88-1}
 Romano DA.
   \emph{Construction of free Abelian groups}, Radovi mat.,  1988. 4(1):151--158.


\bibitem{dar-96}
 Romano DA.
  \emph{Coequality relations, a survey}, Bull. Soc. Math. Banja Luka, 1996. 3:1--36.

\bibitem{dar-02}
Romano DA. \emph{Some relations and subsets generated by principal consistent subset of semigroup with apartness}, Univ.
Beograd, Publ. Elektrotehn. Fak. Ser. Mat. 2002. 13:7--25.  URL \url{https://www.jstor.org/stable/43666560}.

\bibitem{dar-05}
Romano DA.
 \emph{A note on family of quasi-antiorder on semigroup}, Kragujevac J. Math., 2005. 27:11--18.

\bibitem{dar-07}
Romano DA.
 \emph{A note on  quasi-antiorder on semigroup}, Novi Sad J. Math.,  2007. 37(1):3--8.

\bibitem {wr1}
 Ruitenburg WBG.
  \textsl{Intuitionistic Algebra, Theory and Sheaf Models},
  Utrecht, Ph.D. Thesis, May 1982, 143 pages, D. van Dalen (adviser).

\bibitem {wr}
Ruitenburg W.
\emph{Inequality in Constructive Mathematics}, Notre
Dame Journal of Formal Logic, 1991.  32(4):533--553.
doi:10.1305/ndjfl/1093635926.


\bibitem{s}
Schein BM.
\emph{On certain classes of semigroups of binary relations},
Sibirski\v{\i} Matematicheski\v{\i} Zhurnal 1965. 6:616--635, (in Russian).

\bibitem{bms}
Schein BM.
 Book Review - Social semigroups a unified theory of
scaling and blockmodelling as applied to social networks (by J. P. Boyd),
Semigroup Forum, 1997. 54:264--268.  ISSN-0037-1912; 1432-2137/e.

\bibitem{ls}
Shevrin LN.
 Semigroups. In Skornyakov L. A., ed.: ``General Algebra''.  Vol II,
 Nauka, Moscow,  1991 pp. 11--119 (Russian).

\bibitem{gs}
 Stolzenberg G.
  \emph{Review of Foundations of constructive analysis}, Bulletin of the American Mathematical Society
  1970. 76(2):301--323.  doi:10.1090/S0002-9904-1970-12455-7.

\bibitem{t}
 Tamura T.
   \emph{Quasi-orders, generalized archimedeaness and semilattice decompositions},
   Math.  Nachr. 1975. 68:201--220.   doi:10.1002/mana.19750680115.

\bibitem {t1}
 Troelstra AS,  van Dalen D.
  Constructivism in Mathematics, An Introduction, (two volumes), North - Holland,
  Amsterdam 1988.   ISBN-9780444703583,  9780080955100.

\end{thebibliography}
\end{document}